\newtheorem{thm}{{Theorem}}[section]
\newtheorem{lem}[thm]{Lemma}
\newtheorem{prop}[thm]{Proposition}
\newtheorem{defn}[thm]{Definition}
\newtheorem{cor}[thm]{Corollary}
\newtheorem{example}[thm]{Example}
\newcommand{\Z}{\mathbb{Z}}
\newcommand{\vv}{{\vec v}}
\newcommand{\vw}{{\vec w}}
\newcommand{\vn}{{\vec n}}
\newcommand{\vm}{{\vec m}}
\newcommand{\cA}{{\mathcal A}}
\newcommand{\cF}{{\mathcal F}}
\newcommand{\cH}{{\mathcal H}}
\newcommand{\cP}{{\mathcal P}}
\newcommand{\cQ}{{\mathcal Q}}
\newcommand{\cR}{{\mathcal R}}
\newcommand{\cT}{{\mathcal T}}
\newcommand{\BZ}{{\mathbb Z}}
\newcommand{\BR}{{\mathbb R}}
\newcommand{\BN}{{\mathbb N}}
\title[Directional
entropy, rank 1]{Rank One  $\mathbb Z^d$ Actions and Directional Entropy}
\date{September 10, 2008}
\author{E. Arthur Robinson, Jr.}
\address{Department of Mathematics\\ George Washington University\\ 2115 G St. NW\\ Washington, DC 20052}
\email{robinson@gwu.edu}
\author{Ay\c se A. \c Sah\. in}
\address{Department of Mathematical Sciences\\ DePaul University\\ 2320
North Kenmore Ave.\\ Chicago, IL 60614}
\email{asahin@condor.depaul.edu}
\keywords{rank $1$ actions, directional entropy}
\subjclass[2000]{37A15,37A35}
\thanks{The research of the second author was supported in part by a DePaul University Research Council Paid Leave.  The second author also wishes to thank the Department of Mathematics of the George Washington University for their hospitality during her sabbatical when most of this research was conducted. }
\begin{document}
\maketitle

\section{Introduction}

Rank one transformations play a central role in the theory of ergodic measure preserving transformations. Having first been identified as
a distinct class by Chacon in \cite{C}, their properties have been studied extensively (see for example \cite{Bax}, \cite{Fr}, \cite{King}, \cite{F1}). Rank one transformations have also served as an important tool for exploring the range of possible behavior of measure preserving transformations (see for example \cite{C}, \cite{Orn1}, \cite{Ru4}). The idea of rank one can be easily generalized to measure preserving actions of $\mathbb Z^d$.   Informally, we think of a rank one action of $\mathbb Z^d$ on a Lebesgue probability space $(X,\mathcal A,\mu)$ as a limit of actions defined on a sequence of Rohlin towers whose levels generate $\mathcal A$.  In the classical case, i.e the case $d=1$,
the most natural shapes for these towers are intervals.  Even in that
case, however, it is possible to define rank one more generally by allowing the towers to have more exotic shapes.   Ferenczi, for example, gives a definition of a class of transformations called {\it funny} rank one, in which the tower shapes are arbitrary F\o lner sets \cite{F}.   He shows that
the tower shapes do matter, because while all rank one transformations with interval tower shapes are loosely Bernoulli \cite{ORW}, there exists a funny rank one transformation that is not \cite{F}.


Many of the most basic properties of rank one transformations
extend to the $\mathbb Z^d$ case, $d>1$, with essentially no restrictions on the tower shapes. These include ergodicity, entropy zero, and simple spectrum   (see Section~\ref{s:rank1}).
While for $d>1$ it might appear that rectangular tower shapes are the obvious analogues of the interval tower shapes of the case $d=1$,
even within the class of  rank one $\mathbb Z^d$ actions with rectangular towers there remains some choice in the shapes of the towers.  In particular, the dimensions of the rectangles can grow to infinity at different rates.  Because the choice of natural tower shapes is less obvious for $d>1$,  we drop the terminology {\it funny} rank one, and replace it with adjectives describing the choices made in the tower shapes.  If the sequence of towers all are rectangular, we call the action {\it rectangular rank one}.

If $T$ is a rank one $\mathbb Z^d$ action with the property that the sequence of towers are rectangles of uniform bounded eccentricity then $T$ is loosely Bernoulli \cite{JS1}.  While it is not known if there is a rectangular rank one $\mathbb Z^d$ action that is not loosely Bernoulli, the fact that the proof in \cite{JS1} does not readily extend to more general sequences of rectangles, together with the one dimensional result of Ferenczi, suggests that even within the class of rectangular rank one $\mathbb Z^d$ actions, we have the possibility of different dynamical behavior.

In this paper we study the directional entropy of rectangular rank one $\mathbb Z^d$ actions with various growth conditions on the tower shapes.  Directional entropy was introduced by Milnor in
\cite{Mil2}, \cite{Mil}, and has been studied extensively (see \cite{Si},\cite{POsaka},\cite{PIsrael},\cite{PK},\cite{BL}).  Here we show that the shape of the towers plays a role in the possible directional entropies that can occur.   We first show that any rectangular rank one $\mathbb Z^2$ action has at least one zero entropy direction.  In \cite{Ru3} Rudolph shows that given any $h>0$ there exists a rectangular rank one $\mathbb Z^2$ action  whose horizontal sub-action is Bernoulli with entropy $h$.   Our main result shows that when the rectangles satisfy a condition we call {\it sub-exponential eccentricity}, which is weaker than bounded eccentricity, then the resulting action has directional entropy zero in every direction. More generally, our result shows that for any sub-exponentially eccentric rank one $\mathbb Z^d$ action, the $\ell$-dimensional entropy is 0 for all $\ell<d$, for all $\ell$ dimensional hyperplanes in $\mathbb Z^d$.   We note that in \cite{DS} the authors construct a family of examples of rank one $\mathbb Z^d$ actions which, in our terminology, are bounded eccentricity rank one actions.  There they compute the entropy of every transformation in the action to be zero.  In the terminology of directional entropy they show that their examples have directional entropy zero in every rational direction.

In order to prove our main theorem we also establish a higher dimensional generalization of a result of Baxter \cite{Bax} which may be of independent interest.  We show that given a rank one $\mathbb Z^d$ action, with a natural restriction on the tower shapes, there is an isomorphic rank one $\mathbb Z^d$ action such that the towers have the same shapes and the sequence of towers form a refining sequence of generators.

The organization of the paper is as follows.  In Section~\ref{s:basicdef} we establish the notation we use in the paper.    In Section~\ref{s:dirent}  we introduce the idea of directional entropy and summarize some results from the theory that we will use.  In Section~\ref{s:rank1} we describe a hierarchy of types of rank one actions based on progressively more restrictive conditions on the sequence of towers.  We also establish properties of rank one actions, highlighting their relationship to this hierarchy. In this section we also state the generalization of the result in \cite{Bax} about refining sequences of partitions, leaving the proof to Section~\ref{last}. Finally in  Section~\ref{s:main} we prove our main results
about the directional entropy of rank one $\BZ^d$ actions.

\section{Basic definitions and a review of directional entropy}\label{s:basicdef}


\subsection{Shapes}

A {\em shape} $R$ is a finite subset of $\BZ^d$.  Let $\vert R\vert$
denote the cardinality of $R$.  For another shape $S$ the inner $S$-boundary of $R$ is defined by
 \begin{equation*}
\partial_S(R)=\bigcup_{R^c\cap(S+\vv)\not=\emptyset}
R\cap \bigg(S+\vv\bigg).
\end{equation*}

A sequence $\cR=\{R_k\}$ of shapes  is a
{\em F\o lner sequence} (see for example \cite{Temp}) if for
any $\vn\in\Z^d$
\begin{equation}
\lim_{k\rightarrow\infty}
\frac{|R_k\triangle (R_k+\vn)|}{|R_k|}=0,\label{folner}
\end{equation}
or equivalently if for any shape $S$,
\begin{equation}
\lim_{k\rightarrow\infty}
\frac{|\partial_S(R_k)|}{|R_k|}=0.\label{vanhove}
\end{equation}
This follows from
the fact that $\partial_S(R_k)\subseteq\bigcup_{\vn\in S-S}R_k\triangle (R_k+\vn)$.


\subsection{Partitions}

Let $(X,\mathcal M,\mu)$ be a Lebesgue probability space and let $L$
be a finite set.  A {\em partition} $\cP$ with {\em alphabet} $L$ is a
measurable function $\cP:X\rightarrow L$.  Equivalently, we think of
$\cP=\{P_a=\cP^{-1}(a):a\in L\}$ as a finite labeled collection of pairwise
disjoint measurable sets (called {\em atoms}) so that $\cup_{a\in
  L}P_a=X$.  We write $P(x)$ for the unique atom $P_a\in\cP$ so that
$\cP(x)=a$ (i.e., $P(x):=\cP^{-1}(\cP(x))$).

For two partitions with alphabet $L$ we define
\begin{equation*}
d(\cP,\cQ)=\sum_{a\in L} \mu(P_a\triangle Q_a).
\end{equation*}
It is well known that $d$ is a complete metric on the space of all partitions with a fixed alphabet.  In particular, this space is essentially a closed subset of $L^1(X,\mu)$.

For a sequence $\cP_k$ of partitions, where $L_k$ is the alphabet of $\cP_k$, we say
$\cP_k\rightarrow\epsilon$ if for any $A\in\mathcal A$,
there exists $I_k\subseteq L_k$ such that
\begin{equation*}
\lim_{k\rightarrow\infty}
\mu((\cup_{a\in I_k}P_a)\triangle A)=0.
\end{equation*}

Let $\cP$ and $\cQ$ be partitions with alphabets $L$ and $M$.  We say
$\cP\le \cQ$ if each $P_a\in \cP$ is a union of elements of $\cQ$:
$P_a=\cup_{b\in I}Q_b$ for some $I\subseteq M$. We define $\cP\vee\cQ$
to be the partition with atoms $P\cap Q$, where $P\in\cP$ and
$Q\in\cQ$. The alphabet for $\cP\vee\cQ$ is $L\times M$.

\subsection{Towers}

Let $T$ be a free measure preserving $\BZ^d$ action on a
Lebesgue probability space $(X,{\mathcal A},\mu)$. Let
$R\subseteq\BZ^d$ be a shape and let $B\in \cA$,
$\mu(B)>0$, satisfy
\begin{equation*}
T^{\vv_1}B\cap T^{\vv_2}B=\emptyset{\rm\ for\ all\ }\vv_1,\vv_2\in R{\rm\ with\ } \vv_1\not=\vv_2.
\end{equation*}
Let $E=(\cup_{\vv\in R}T^\vv B)^c$.
We call the partition $\cP=\{T^\vv A:\vv\in R\}\cup\{E\}$  a
{\em Rohlin tower}, or more specifically, a
 {\em $T$-tower} with {\em shape}-$R$ and {\em base} $B$.
The sets $T^\vv B$, $\vv\in R$ are called the {\em levels} of the tower and $E$
 is called the {\em error set}.
The tower partiton $\cP$ is defined by labeling the level $T^\vv B$ with $\vv$, and labeling the set
$E$ with $\vec\epsilon=\frac{1}{2}(1,1,\dots,1)$. Thus
the alphabet of $\cP$ is
$L=R\cup\{\vec\epsilon\}$.

\subsection{Directional Entropy}\label{s:dirent}

In this section we state only those definitions and results about the entropy of partitions and directional entropy that are necessary for our work.  We refer the reader to \cite{Mil},\cite{Mil2},\cite{BL},\cite{PIsrael},\cite{PK} for a detailed development of the theory of directional entropy.

We define the
{\em entropy} of a partition $\cP$ with alphabet $L$ by
\begin{equation*}
H(\cP)=\sum_{a\in L} -\mu(P_a)\log(\mu(P_a)),
\end{equation*}
where we define $0\log(0)=0$. 


\begin{lem}\label{shieldslem}{\em (\cite{Sh}, Lemma I.6.8)}
Suppose $\cP$ is a partition with alphabet $L$.
Let $M\subseteq L$ and let $\beta=\mu(\cup_{b\in M}P_b)$.
Then
\begin{equation*}
-\sum_{b\in M}\mu(P_b)\log\mu(P_b)\le\beta\log|M|-\beta\log\beta.
\end{equation*}
\end{lem}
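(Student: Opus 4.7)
The plan is to reduce the inequality to the well-known fact that the uniform distribution maximizes the Shannon entropy on a finite set of cardinality $|M|$.

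First I would normalize. Define $q_b = \mu(P_b)/\beta$ for $b \in M$. Since $\beta = \sum_{b \in M} \mu(P_b)$, the numbers $\{q_b : b \in M\}$ form a probability distribution on the finite set $M$. By the standard entropy bound (itself a direct consequence of Jensen's inequality applied to the concave function $-x\log x$), we have
\begin{equation*}
-\sum_{b \in M} q_b \log q_b \le \log |M|.
\end{equation*}

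Next I would unwind the normalization. Substituting $q_b = \mu(P_b)/\beta$ and using $\log q_b = \log \mu(P_b) - \log \beta$, the left side becomes
\begin{equation*}
-\sum_{b \in M} \frac{\mu(P_b)}{\beta}\bigl(\log \mu(P_b) - \log \beta\bigr)
= \frac{1}{\beta}\Bigl(-\sum_{b \in M} \mu(P_b)\log\mu(P_b)\Bigr) + \log \beta,
\end{equation*}
where I used $\sum_{b \in M}\mu(P_b) = \beta$ to simplify the term containing $\log\beta$. Multiplying through by $\beta > 0$ (the case $\beta = 0$ is trivial, since then every $\mu(P_b)=0$ and both sides vanish under the convention $0\log 0 = 0$) and rearranging yields exactly
\begin{equation*}
-\sum_{b \in M} \mu(P_b)\log\mu(P_b) \le \beta \log |M| - \beta \log \beta.
\end{equation*}

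There is no real obstacle here: the only subtle point is handling $\beta = 0$ separately, which is immediate from the convention $0\log 0 = 0$. The entire argument is a one-line rescaling of the maximum-entropy inequality for probability vectors on a set of size $|M|$.
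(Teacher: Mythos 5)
Your proof is correct. The paper itself does not prove this lemma at all --- it is simply quoted from Shields (\cite{Sh}, Lemma I.6.8) --- and your argument (normalize by $\beta$, apply the maximum-entropy bound $-\sum q_b\log q_b\le\log|M|$ for the probability vector $q_b=\mu(P_b)/\beta$, then rescale, with the trivial case $\beta=0$ handled by the convention $0\log 0=0$) is exactly the standard proof of that cited result.
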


Let $V$ be an $n$-dimensional subspace of $\BR^d$, with $1\le n< d$,  and let
$V^\perp$ be its orthogonal complement. Let $Q$ be the unit cube in $V$ and let
$Q'$ be the unit cube in $V^\perp$ centered at $\vec 0$.
Let
\begin{equation}\label{tiltbox}
S(V,t,m)=tQ+mQ',
\end{equation}
and for a partition $\cP$
let
\begin{equation}
\cP_{V,t,m}=\bigvee_{\vw\in S(V,t,m)\cap\BZ^d}T^{-\vw}\cP,
\end{equation}
The following  definitions are essentially due to Milnor \cite{Mil} and
closely match the definitions given in \cite{BL}. First define
\begin{equation}\label{dent3}
h_n(T,V,\cP,m )=\limsup_{t\rightarrow\infty}\frac1{t^n}
H(\cP_{V, t,m}).
\end{equation}
Then put
\begin{equation}\label{dent31}
h_n(T,V,\cP)=\sup_{m>0}h_n(T,V,\cP,m ).
\end{equation}
We call
\begin{equation}
h_n(T,V)=\sup_{\cP{\rm\ finite}} h_n(T,\cP,V)
\end{equation}
the $n$-dimensional entropy of $T$ in direction $V$.  In the case where $n=1$ and $V$ is the subspace spanned by a vector $\vec v$, we simply call $h_1(T,V)$ the directional entropy of $T$ in direction $\vec v$.

In the case $n=d$, we have $V=\BR^d$, and we let
$h_d(T,\BR^d)=h_d(T)$ denote the usual $d$-dimensional entropy.
In particular, let $S(t)=[0,t]^d$, and for a partition
$\cP$ let
\begin{equation*}
\cP_t=\bigvee_{\vw\in S(t)\cap\BZ^d}T^{-\vw}\cP,
\end{equation*}
and let
\begin{equation*}
h_d(T,\cP)=\lim_{t\rightarrow 0} \frac1{t^d}
H(\cP_t).
\end{equation*}
Then as usual
\begin{equation*}
h_d(T)=\sup_{\cP{\rm\ finite}} h_n(T,\cP).
\end{equation*}

The next two results are useful tools to compute directional entropy.

\begin{lem}[Milnor, \cite{Mil}]
\begin{equation}\label{dent4}
h_n(T,V,\cP,m)=\lim_{t\rightarrow\infty}\frac1{t^n}
H(\cP_{V,t,m})
\end{equation}
and
\begin{equation}
 h_n(T,V,\cP)=\lim_{m\rightarrow\infty}h_n(T,V,\cP,m).
\end{equation}
\end{lem}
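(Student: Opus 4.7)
The plan is to prove the two equalities separately. The first asserts that the $\limsup$ defining $h_n(T,V,\cP,m)$ is in fact a limit in $t$; the second, that the quantity is monotone nondecreasing in $m$, so the supremum over $m$ equals the limit as $m\to\infty$.

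For the limit in $t$, fix $m$ and set $g(t)=H(\cP_{V,t,m})$. The strategy is a multi-dimensional Fekete-style subadditivity argument. In the prototypical axis-aligned case where $V$ is spanned by $n$ of the coordinate directions, the lattice set $S(V,t,m)\cap\BZ^d$ is a product of $n$ intervals (in $V$) with $d-n$ intervals (in $V^\perp$). Splitting along one of the $V$-axes into two disjoint lattice pieces and applying the subadditivity $H(\cA\vee\cB)\le H(\cA)+H(\cB)$ together with the $T$-invariance of $H$ gives the standard multi-dimensional subadditivity of $g$ in its $n$ axis-directions, from which Fekete's lemma delivers
\begin{equation*}
\lim_{t\to\infty}\frac{g(t)}{t^n} \;=\; \inf_t\frac{g(t)}{t^n}.
\end{equation*}
For a general $n$-plane $V$, the tiling of $tQ\subset V$ by $\lceil t/s\rceil^n$ translates of $sQ$ is not by integer vectors; one rounds each $V$-translate to a nearby element of $\BZ^d$, absorbing the drift into the $V^\perp$-direction at the cost of enlarging the $V^\perp$-extent of the tiles by a fixed geometric constant. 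A careful bookkeeping shows that this perturbation contributes only lower-order terms to the entropy density, so the Fekete-type bound closes and the limit still exists.

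The second equality is a monotonicity observation. For $m_1\le m_2$, the containment $S(V,t,m_1)\subseteq S(V,t,m_2)$ gives $\cP_{V,t,m_1}\le\cP_{V,t,m_2}$, hence $H(\cP_{V,t,m_1})\le H(\cP_{V,t,m_2})$ for every $t$. Dividing by $t^n$ and invoking the first equality preserves the inequality, so $m\mapsto h_n(T,V,\cP,m)$ is nondecreasing, and its supremum over $m$ coincides with $\lim_{m\to\infty}h_n(T,V,\cP,m)$.

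The main obstacle is the non-axis-aligned case of the first equality: the natural tiling of $tQ$ by translates of $sQ$ does not lie on the integer lattice, and one must carry out the rounding-plus-$V^\perp$-enlargement so that the $O(1)$ drift from each tile contributes only lower-order terms to the entropy density. In the axis-aligned case the multi-dimensional Fekete argument is immediate; the geometric perturbation required for general $V$ is the heart of the lemma.
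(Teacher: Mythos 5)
Your second equality is fine: $S(V,t,m_1)\subseteq S(V,t,m_2)$ for $m_1\le m_2$, so $H(\cP_{V,t,m_1})\le H(\cP_{V,t,m_2})$, monotonicity in $m$ follows, and the supremum is a limit. Note, for the record, that the paper itself offers no proof of this lemma -- it is quoted from Milnor -- so the relevant comparison is with Milnor's (and Boyle--Lind's) argument, and that argument is needed precisely at the point where your sketch breaks down.

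The gap is in the first equality, at the step you yourself flag as the heart of the matter. When you tile $tQ$ by $V$-translates of $sQ$ and round each translate to a nearby point of $\BZ^d$, the drift has size on the order of $\sqrt d$, so each tile's lattice content is only contained in an integer translate of $S(V,s+c,m+c)\cap\BZ^d$ for a \emph{fixed} constant $c>0$. Enlarging the $V^\perp$-extent from $m$ to $m+c$ is not a lower-order perturbation: the number of lattice points in the slab, and hence the entropy, can grow by a constant \emph{factor} (for a Bernoulli $\BZ^d$-action with a generating partition, $H$ over a slab is comparable to the number of lattice points, which scales like the transverse thickness). So the inequality your tiling actually produces is of the form $\limsup_t t^{-n}H(\cP_{V,t,m})\le \liminf_s s^{-n}H(\cP_{V,s,m+c})$, which compares the density at thickness $m$ with the density at thickness $m+c$; Fekete does not close at fixed $m$, and what you get is only that $\sup_m\limsup_t$ equals $\sup_m\liminf_t$ (essentially the second display and the well-definedness of $h_n(T,V,\cP)$), not the existence of $\lim_t$ for each fixed $m$. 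To prove the first display for a general (irrational) $V$ one needs a genuinely smaller transverse error: by a Dirichlet/pigeonhole argument the integer vectors lying within $\delta$ of $V$ are relatively dense along $V$ for every $\delta>0$, so one can translate tiles by such vectors, enlarging the tile only by a bounded amount in the $V$-directions (harmless after dividing by $s^n$) and by an arbitrarily small $\delta$ transversally; then one must check that thickening from $m$ to $m+\delta$ adds only $O(\delta)\,t^n$ lattice points, hence at most $O(\delta)\,t^n\log|L|$ entropy, and let $\delta\to0$. (Rational subspaces $V$ are handled separately, since there integer vectors in $V$ itself make the plain subadditivity argument work, as in your axis-aligned case.) Without this refinement, your claim that ``the perturbation contributes only lower-order terms'' is false as stated, and the first equality is not established.
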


\begin{lem}\label{seqlem}{\em(\cite{BL}, Proposition 6.15)}
Let $1\le n\le d$. If $\cP_k\le\cP_{k+1}$ and
$\cP_k\rightarrow\epsilon$ then
\begin{equation}\label{epsilon}
h_n(T,V)=\lim_{k\rightarrow\infty}h_n(T,V,\cP_k).
\end{equation}
\end{lem}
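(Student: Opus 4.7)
The plan is to establish $L := \lim_k h_n(T,V,\cP_k) \le h_n(T,V)$ and the reverse $h_n(T,V) \le L$ separately. The first is straightforward: since $\cP_k \le \cP_{k+1}$, the refinements satisfy $(\cP_k)_{V,t,m} \le (\cP_{k+1})_{V,t,m}$, so $H((\cP_k)_{V,t,m}) \le H((\cP_{k+1})_{V,t,m})$. Dividing by $t^n$, letting $t \to \infty$, and taking sup over $m$ shows $h_n(T,V,\cP_k) \le h_n(T,V,\cP_{k+1})$; each term is bounded above by $h_n(T,V)$, so $L$ exists and $L \le h_n(T,V)$.

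For the reverse inequality, I fix any finite partition $\cQ$ with alphabet $M$. Using $\cP_k \to \epsilon$, for each $a \in M$ I can find pairwise disjoint $I_{k,a} \subseteq L_k$ so that the $\cP_k$-measurable partition $\cQ'_k$ with atoms $Q'_a = \bigcup_{b \in I_{k,a}} \cP_k^{-1}(b)$ satisfies $d(\cQ,\cQ'_k) \to 0$; by standard continuity of Shannon entropy on partitions with fixed alphabet, $H(\cQ \mid \cQ'_k) \to 0$. Because $T^{-\vw}\cQ'_k \le (\cQ'_k)_{V,t,m}$ for every $\vw \in S(V,t,m) \cap \BZ^d$, combining the chain rule $H(A \vee B) = H(B) + H(A \mid B)$ with subadditivity of conditional entropy and $T$-invariance gives
\begin{equation*}
H((\cQ \vee \cQ'_k)_{V,t,m}) \le H((\cQ'_k)_{V,t,m}) + |S(V,t,m) \cap \BZ^d| \cdot H(\cQ \mid \cQ'_k).
\end{equation*}
Using $\cQ \le \cQ \vee \cQ'_k$, $\cQ'_k \le \cP_k$, and $|S(V,t,m) \cap \BZ^d| \sim t^n m^{d-n}$, dividing by $t^n$ and sending $t \to \infty$ yields
\begin{equation*}
h_n(T,V,\cQ,m) \le h_n(T,V,\cQ'_k,m) + m^{d-n} H(\cQ \mid \cQ'_k) \le L + m^{d-n} H(\cQ \mid \cQ'_k).
\end{equation*}

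The main obstacle is the factor $m^{d-n}$, which blows up when $n<d$, so one cannot simply send $k \to \infty$ first and $m \to \infty$ second. The remedy is to reverse the order of choice: given $\delta > 0$, I first pick $m$ large enough that $h_n(T,V,\cQ,m) > h_n(T,V,\cQ) - \delta/2$, which is possible by the definition of $h_n(T,V,\cQ)$ as a sup over $m$, and only then choose $k = k(m)$ large enough that $m^{d-n} H(\cQ \mid \cQ'_k) < \delta/2$. This produces $h_n(T,V,\cQ) < L + \delta$, and since $\delta > 0$ and the finite partition $\cQ$ were arbitrary, $h_n(T,V) = \sup_\cQ h_n(T,V,\cQ) \le L$, completing the proof.
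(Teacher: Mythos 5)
Your proposal is correct. Note, though, that the paper itself offers no proof of this lemma: it is quoted from Boyle--Lind (\cite{BL}, Proposition 6.15), with only the remark that the expansiveness hypothesis appearing there is not used. What you have written is essentially the standard generator argument that lies behind such statements, and it does make transparent why expansiveness is irrelevant: one direction is pure monotonicity of $H$ under refinement, and the other rests on the inequality $H(\cQ_{V,t,m})\le H((\cP_k)_{V,t,m})+|S(V,t,m)\cap\BZ^d|\,H(\cQ\mid\cQ'_k)$, obtained from the chain rule, subadditivity of conditional entropy, and invariance of $\mu$. Your handling of the main pitfall is the right one: since the error coefficient grows like $m^{d-n}$ when $n<d$, you must fix $m$ first (using the definition of $h_n(T,V,\cQ)$ as a supremum over $m$) and only then let $k\to\infty$; reversing these quantifiers would break the proof. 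Three small points deserve a line each but do not affect the structure. First, $\cP_k\rightarrow\epsilon$ is a per-set approximation property, so to get a genuine $\cP_k$-measurable partition $\cQ'_k$ indexed by the alphabet of $\cQ$ you should disjointify the approximating unions (assigning the leftover to one fixed label), after which $d(\cQ,\cQ'_k)\rightarrow 0$ and the Fano/Rokhlin-metric estimate gives $H(\cQ\mid\cQ'_k)\rightarrow 0$. Second, for a tilted box the asymptotics $|S(V,t,m)\cap\BZ^d|\sim t^nm^{d-n}$ hides a $V$-dependent constant; all you actually need is $\limsup_t |S(V,t,m)\cap\BZ^d|/t^n\le (m+2\sqrt d)^{\,d-n}<\infty$, and since $m$ is fixed before $k$ is chosen, any finite constant serves. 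Third, the quantities may be infinite (the paper notes $h_n$ can be $\infty$ for $n<d$); if $h_n(T,V,\cQ)=\infty$ the same estimate with an arbitrary threshold in place of $h_n(T,V,\cQ)-\delta/2$ forces $L=\infty$, so equality still holds.
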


\noindent{\bf Comment.} The hypothesis ``expansive'' is included in
\cite{BL} but is not used in the proof.

We note that it is well known that if $V_1\subseteq V_2$
are subspaces of $\BR^d$ with $n_1=\dim(V_1)<n_2=\dim(V_2)\le d$, then $h(T,V_2)>0$ implies $h(T,V_1)=\infty$.

The following straightforward observation is key to the structure of our later arguments.
\begin{lem}\label{structure}
Suppose $P_k$ is a sequence of partitions with $P_k\le P_{k+1}$  with $P_k\rightarrow\epsilon$ and $V$ is an $n$ dimensional subspace of $\mathbb R^d$.  If for all $k$ and $P=P_k$
\begin{equation}\label{goal}
\lim_{t\rightarrow\infty}\frac1{t^n}H(P_{V,t,m})=0
\end{equation}
for all $m>0$, then $h_n(T,V)=0$.
\end{lem}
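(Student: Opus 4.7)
The plan is to unwind the layered definition of $h_n(T,V)$ and then invoke Lemma~\ref{seqlem}. First, by hypothesis the ordinary limit $\lim_{t\to\infty}(1/t^n)H((P_k)_{V,t,m})$ exists and equals $0$ for every $k$ and every $m>0$. Since an ordinary limit equal to $0$ forces the limsup to be $0$, the definition in (\ref{dent3}) immediately yields $h_n(T,V,P_k,m)=0$ for all such $k$ and $m$.

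Next, the supremum in (\ref{dent31}) preserves this vanishing, so
\begin{equation*}
h_n(T,V,P_k)=\sup_{m>0} h_n(T,V,P_k,m)=0
\end{equation*}
for every $k$.

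Finally, the refining and generating hypotheses $P_k\le P_{k+1}$ and $P_k\to\epsilon$ are exactly what Lemma~\ref{seqlem} requires, so
\begin{equation*}
h_n(T,V)=\lim_{k\to\infty}h_n(T,V,P_k)=0,
\end{equation*}
as desired. No substantive obstacle arises here: the lemma simply repackages the three-step definition of directional entropy (limsup in $t$, supremum in $m$, supremum over finite partitions) into a convenient sufficient condition tailored to the later arguments, where one will exhibit an explicit refining sequence of tower partitions $\{P_k\}$ and verify (\ref{goal}) directly for each one.
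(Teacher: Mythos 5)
Your proof is correct and follows exactly the paper's route: the paper's own proof simply cites (\ref{dent3}), (\ref{dent31}), and Lemma~\ref{seqlem}, which is precisely the three-step unwinding you wrote out in detail.
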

\begin{proof}
This follows immediately from (\ref{dent3}), (\ref{dent31}), and Lemma~\ref{seqlem}.
\end{proof}

\section{Rank 1}\label{s:rank1}

The most general definition of a rank one action that we will consider is the following.

\begin{defn}\label{plain}
Let $T$ be a free measure preserving $\mathbb Z^d$ action on a Lebesge probability space $(X,{\mathcal A},\mu)$. We say $T$ is $\cR$ {\em rank one} if $\cR=\{R_k\}$ is a sequence of shapes, and there is a sequence $\cP_k$ of
$T$-towers of shape $R_k$ such that $\cP_k\rightarrow\epsilon$.
We say $T$ is {\em rank one} if it is $\cR$ rank one for some $\cR$.
\end{defn}
\noindent This definition places no restrictions on the shapes of the towers, except that implicitly $\cP_k\rightarrow\epsilon$ implies $|R_k|\rightarrow\infty$.

One classical result about rank one transformations that does not depend on the geometry of the tower shapes $R_k$ is ergodicity.

\begin{prop}
If $T$ is a  rank one action of $\BZ^d$,  then $T$ is ergodic.
\end{prop}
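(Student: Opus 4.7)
The plan is to show that any $T$-invariant measurable set $A$ with $\mu(A)>0$ satisfies $\mu(A)=1$. The engine is the observation that $T$-invariance forces every level of a Rohlin tower to have the same overlap with $A$, which allows $\mu(A)$ to be computed from the tower in two different ways that together force $\mu(A)^2=\mu(A)$.

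First, write $c_k = \mu(A\cap B_k)/\mu(B_k)$, where $B_k$ is the base of $\cP_k$. By $T$-invariance of $A$ and measure preservation,
\begin{equation*}
\mu(A\cap T^{\vv}B_k) \;=\; \mu(T^{-\vv}A\cap B_k) \;=\; \mu(A\cap B_k) \;=\; c_k\mu(B_k)
\end{equation*}
for every $\vv\in R_k$. Summing over the levels of $\cP_k$ and adding the error set gives
\begin{equation*}
\mu(A) \;=\; \mu(A\cap E_k) + c_k\,|R_k|\,\mu(B_k).
\end{equation*}
A standard consequence of the generating property $\cP_k\to\epsilon$ on a non-atomic Lebesgue space is that $\mu(E_k)\to 0$, and hence $|R_k|\mu(B_k)\to 1$, forcing $c_k\to\mu(A)$.

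Second, apply $\cP_k\to\epsilon$ to $A$ itself: pick $I_k\subseteq L_k$ with $\mu(A\triangle\bigcup_{a\in I_k}P_a)\to 0$. Since $\mu(E_k)\to 0$, replacing $I_k$ by $J_k := I_k\cap R_k$ changes the approximating set by at most $\mu(E_k)$, so the set $A_k := \bigcup_{\vv\in J_k}T^{\vv}B_k$ still satisfies $\mu(A_k\triangle A)\to 0$. This yields $|J_k|\mu(B_k) = \mu(A_k)\to\mu(A)$ and $\mu(A\cap A_k)\to\mu(A)$. But the invariance computation of the first step also gives
\begin{equation*}
\mu(A\cap A_k) \;=\; \sum_{\vv\in J_k}\mu(A\cap T^{\vv}B_k) \;=\; c_k\,|J_k|\,\mu(B_k).
\end{equation*}
Passing to the limit, the left side tends to $\mu(A)$ and the right side tends to $\mu(A)\cdot\mu(A)$, so $\mu(A)^2=\mu(A)$, whence $\mu(A)\in\{0,1\}$.

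The only nontrivial point is the claim $\mu(E_k)\to 0$, and I would expect this to be the main obstacle to a fully self-contained proof. One way to justify it is to note that if $\mu(E_k)\geq\delta>0$ along a subsequence, then the atom $E_k$ is a fixed-size ``indivisible'' piece of the partition $\cP_k$, and no measurable subset of $X$ strictly intermediate between $\emptyset$ and a suitably constructed test set inside $E_k$ could be approximated modulo measure $\delta/2$, contradicting $\cP_k\to\epsilon$; alternatively, one can simply incorporate $\mu(E_k)\to 0$ into the definition of rank one, as is commonly done. Once this is in hand, the argument above is pure computation.
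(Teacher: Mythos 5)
Your argument is correct in substance, and it finishes differently than the paper does. The paper's proof is the classical rank one argument: using $\cP_k\rightarrow\epsilon$ it locates a single level of some tower that is more than $(1-\epsilon)$-covered by the invariant set $A$, uses invariance to propagate this to every level, and concludes that $\mu(A)\ge(1-\epsilon)(1-\mu(E_k))$ is arbitrarily close to $1$. You exploit the same consequence of invariance---that $A$ meets every level of the $k$-th tower in the same proportion $c_k$---but you never need to extract a nearly full level: you approximate $A$ by a union of levels $A_k$ and compute $\mu(A\cap A_k)$ two ways, obtaining $\mu(A)^2=\mu(A)$ in the limit. This is a genuinely different, more computational way to close the argument; what the paper's route buys is the direct conclusion that $\mu(A)$ is close to $1$ once one good level is found, while yours buys not having to worry about which atoms of the approximating union are levels versus the error atom (you simply discard the error label).

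The one step that deserves care is the claim $\mu(E_k)\rightarrow 0$, and here your sketched justification does not work as written: $E_k$ varies with $k$, so ``a test set inside $E_k$'' is not a fixed measurable set to which the definition of $\cP_k\rightarrow\epsilon$ can be applied. A correct argument is available: if $\mu(E_{k_j})\ge\delta$ along a subsequence, pass to a further subsequence along which $\chi_{E_{k_j}}\rightarrow f$ weak-$*$ in $L^\infty$, so $\int f\,d\mu\ge\delta$; since freeness of the action forces $\mu$ to be non-atomic, one can fix a set $A$ with $\int_A f\,d\mu=\int_{A^c}f\,d\mu=\tfrac12\int f\,d\mu>0$, and then any union of atoms of $\cP_{k_j}$ differs from $A$ in measure by at least $\min\bigl(\mu(A\cap E_{k_j}),\mu(E_{k_j}\setminus A)\bigr)$, which tends to $\tfrac12\int f\,d\mu>0$, contradicting $\cP_{k_j}\rightarrow\epsilon$. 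You should note, however, that this issue is shared with the paper: its definition of rank one does not postulate $\mu(E_k)\rightarrow 0$, and its own proof implicitly uses that the towers eventually fill the space (the ``one level, hence all levels'' step only yields $\mu(A)\ge(1-\epsilon)(1-\mu(E_k))$). So the gap you flag is real but repairable, and with the weak-$*$ argument above (or with the common convention that $\mu(E_k)\rightarrow 0$ is built into the definition) your proof is complete.
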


\begin{proof}
This is essentially the same as the well known proof for
rank one  $\mathbb Z$ actions
(see for example \cite{Fr}). Let $A$ be an invariant set of positive measure.
Since $\cP_k\rightarrow\epsilon$
we can, for any
$\epsilon>0$, find a tower $\cP_k$ such that on of the levels of the tower is more than $(1-\epsilon)$ covered by $A$.  The invariance of $A$ guarantees that this property holds for the
entire tower.  Since $\epsilon$ is chosen to be arbitrary, we conclude that $A$ has arbitrarily large measure.
 \end{proof}

Given the generality of our definition it is natural to ask how strange the tower shapes can actually be.  While we do not address this question in this paper, we can easily give a simple example of a rank one $\mathbb Z^2$ action with a one dimensional tower.  This example is also rank one with
non-degenerate
rectangular tower shapes (\cite{KY},\cite{RS4}).

\begin{example}\label{simple}
Consider the irrational rotation $R_\alpha$. Since $R_\alpha$ is rank 1 as
a $\BZ$ action, there exists
a sequence of towers $\cQ_k=\{R_\alpha^n B_k:n=0,1,\dots,\ell_k-1\}$
where  $\cQ_k\rightarrow\epsilon$. Now take $R_\beta$ where $\beta,\frac{\beta}{\alpha}\notin{\mathbb Q}$ and define a free $\BZ^2$ action on the circle by
$T^{(n,m)}=R^{n}_\alpha R^{m}_\beta$. Let $\cF=\{[0,1,\dots,\ell_k]\times\{0\}\}$. Then $T$ is $\cF$ rank 1.
\end{example}

In Definition~\ref{plain} the towers have no a priori relationship to one another.  In constructing examples, though, the towers are usually obtained by ``cutting and stacking'' procedures which yield a refining sequence of tower partitions.  In particular, each tower partition is measurable with respect to all subsequent tower partitions.  The following definition incorporates this structure into the tower partitions.

\begin{defn}
We say $T$ is {\em stacking $\cR$ rank one}, $\cR=\{R_k\}$, if $T$ is $\cR$ rank one for a sequence $\cP_k$ of
$T$-towers with shape $R_k$ that
also satisfies $\cP_{k+1}\ge\cP_{k}$.
\end{defn}

Suppose $T$ is a stacking $\cR$ rank one action with
$\cR=\{R_k\}$ such that  $\cup(R_k-R_k)=\mathbb Z^d$. Then it is easy to see that $T$ is isomorphic to an action $T_1$ costructed by a ``cutting and stacking" construction using the shapes $R_k$ (see for example \cite{PR} for a formal definition of such a construction in the case where $d=2$).

Two other classical results about rank one transformations can be extended to the case of stacking rank one actions, again without placing any further restrictions on the shapes in $\cR$.

\begin{thm}
If $T$ is a stacking rank one $\BZ^d$ action  then $T$ has simple spectrum.
\end{thm}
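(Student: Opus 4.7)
My plan is to reduce the claim to the existence of a cyclic vector for the unitary representation $\vn \mapsto U_T^{\vn}$ on $L^2(X,\mu)$ and then to mimic Baxter's classical argument \cite{Bax} (see also \cite{Fr}) for the $d=1$ case. The only substantive change needed for general $d$ is that the index set of translations becomes $\BZ^d$ instead of $\BZ$; every step of Baxter's proof survives verbatim with this replacement, so no new idea enters. Recall that $T$ has simple spectrum exactly when some $f \in L^2(X,\mu)$ satisfies $\overline{\operatorname{span}}\{U_T^{\vn} f : \vn \in \BZ^d\} = L^2(X,\mu)$.

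Write $\cP_k = \{T^{\vv} B_k : \vv \in R_k\} \cup \{E_k\}$ for the stacking sequence of $T$-towers provided by the hypothesis, and set
\[
V_k = \overline{\operatorname{span}}\{U_T^{\vn} \chi_{B_k} : \vn \in \BZ^d\}.
\]
Since $U_T^{\vv}\chi_{B_k}$ is the indicator of the level $T^{\vv} B_k$ when $\vv \in R_k$, the subspace $V_k$ contains every $\cP_k$-measurable function that vanishes on $E_k$; thus every $\cP_k$-measurable function lies within $L^2$-distance $\sqrt{\mu(E_k)}$ of $V_k$, and the hypothesis $\cP_k \to \epsilon$ already gives density of $\bigcup_k V_k$ in $L^2(X,\mu)$. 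The stacking hypothesis $\cP_{k+1} \ge \cP_k$ also supplies a decomposition $B_k = \bigsqcup_{\vv \in S_k} T^{\vv} B_{k+1} \sqcup F_k$ with $S_k \subseteq R_{k+1}$ and $F_k \subseteq E_{k+1}$, so $\chi_{B_k}$ lies within $\sqrt{\mu(E_{k+1})}$ of $V_{k+1}$; $\BZ^d$-invariance then extends this approximate inclusion to every element of $V_k$.

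To produce a single cyclic vector I would pass to a subsequence $k_j$ along which $\sqrt{\mu(E_{k_j})}$ decays sufficiently fast, and take $f = \sum_j c_j \chi_{B_{k_j}}$ with positive weights $c_j$ arranged so that $f \in L^2(X,\mu)$. The main obstacle is verifying that the cyclic subspace $Z(f) = \overline{\operatorname{span}}\{U_T^{\vn} f\}$ contains each $\chi_{B_{k_j}}$ exactly, not merely as a limit of elements in the successive $V_{k_j}$. Rapid shrinkage of $E_{k_j}$ combined with the orthogonal decomposition of $L^2(X,\mu)$ into the columns of $\cP_{k_j}$ should allow each summand of $f$ to be isolated inside $Z(f)$, one index at a time; once that is done, $Z(f) \supseteq V_{k_j}$ for every $j$, and density of $\bigcup_k V_k$ forces $Z(f) = L^2(X,\mu)$, which gives the required cyclic vector and hence simple spectrum.
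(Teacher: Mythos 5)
Your overall strategy (reduce simple spectrum to exhibiting a cyclic vector, then build that vector Baxter-style out of the tower structure) is the same as the paper's, and your preparatory observations -- density of $\bigcup_k V_k$ from $\cP_k\rightarrow\epsilon$, and the approximate inclusion $V_k\subseteq V_{k+1}$ from stacking -- are fine. But the proposal has a genuine gap, and you have named it yourself: showing that the cyclic subspace $Z(f)$ actually \emph{contains} each $\chi_{B_{k_j}}$ (not merely comes close to it) is the entire mathematical content of the theorem, and your argument for it is only the sentence that rapid decay of $\mu(E_{k_j})$ ``should allow each summand of $f$ to be isolated.'' No mechanism is given. Note that closeness is useless here: $Z(f)$ is a fixed closed subspace, and the estimate $\|f-c_1\chi_{B_{k_1}}\|_2\le\sum_{j\ge 2}c_j\sqrt{\mu(B_{k_j})}$ only bounds the distance from $\chi_{B_{k_1}}$ to $Z(f)$ by a fixed positive number once $f$ is chosen; to get exact containment one must combine \emph{translates} $U_T^{\vn}f$ in a nontrivial way, and that is precisely the delicate part of Baxter's proof.

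Moreover, your specific choice $f=\sum_j c_j\chi_{B_{k_j}}$ departs from Baxter's construction in a way that blocks a direct citation. Under the stacking hypothesis the bases are typically nested (e.g.\ $B_{k+1}\subseteq B_k$ when $\vec 0$ lies in the stacking set), so the summands of your $f$ have overlapping supports and need the weights $c_j$, whereas Baxter's isolation argument leans on choosing one level from each tower, $F_k=T^{\vv_k}B_k$, with the $F_k$ \emph{pairwise disjoint} (their existence is exactly where stacking is used), and setting $f=\sum_k\chi_{F_k}$. With that choice the argument of \cite{Bax} goes through verbatim, with $\BZ$ replaced by $\BZ^d$, which is all the paper does. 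So either you should revert to the disjoint-levels vector so that Baxter's argument applies as stated, or you must supply the isolation argument for your weighted, nested choice of $f$; as written, neither is done.
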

\begin{proof}
The argument in Baxter \cite{Bax} for the case $d=1$ remains valid
in the case $d>1$.
For $f\in L^2(X,\mu)$, let $U_T^\vv f(x)=f(T^\vv x)$. The cyclic subspace generated by $f$, denoted
$\cH(f)$, is the closure of the span of
$\{U_T^\vv f:\vv\in\BZ^d\}$. Simple spectrum
means that there exists $f$ so that
$\cH(f)=L^2(X,\mu)$. Since $T$ is stacking rank 1, there
exist an infinite sequence $F_k=T^{\vv_k}B_k$ of pairwise
disjoint $T$-tower levels.  Let
$f=\sum\chi_{F_k}$. Baxter's argument shows that
this function $f$ satisfies $\cH(f)=L^2(X,\mu)$. The argument does not depend on the dimension of the acting group.
\end{proof}
An immediate corollary of this result is the following.
\begin{cor}
If $T$ is a stacking rank one $\BZ^d$ action then $h_d(T)=0$.
\end{cor}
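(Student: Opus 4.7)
The statement is flagged as an immediate corollary, so the plan is simply to invoke the classical principle that simple spectrum forces zero (top-dimensional) entropy, applied to the $\mathbb{Z}^d$-setting. By the preceding theorem there exists $f\in L^2(X,\mu)$ whose $\mathbb{Z}^d$-translates $\{U_T^{\vec v}f:\vec v\in\BZ^d\}$ span all of $L^2(X,\mu)$, so the maximal spectral multiplicity of the unitary representation $\vec v\mapsto U_T^{\vec v}$ equals one.

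I would argue the entropy statement by contradiction: assume $h_d(T)>0$. The earlier proposition gives ergodicity, so by the $\mathbb{Z}^d$-version of Sinai's factor theorem (or, equivalently, by the $\mathbb{Z}^d$-version of the Rokhlin--Sinai theorem on the orthocomplement of the Pinsker factor, due to Conze, Kami\'nski, Thouvenot, and others), $T$ admits a nontrivial Bernoulli $\mathbb{Z}^d$-factor $\pi\colon X\to Y$. The induced isometric embedding $L^2(Y)\hookrightarrow L^2(X,\mu)$ intertwines the $\mathbb{Z}^d$-actions, and on the orthogonal complement of constants in $L^2(Y)$ the action has countable Lebesgue spectrum. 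Hence the maximal spectral multiplicity of $T$ is at least that of the Bernoulli factor, namely $\aleph_0$, contradicting the simple spectrum established in the theorem. Thus $h_d(T)=0$.

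The only real content beyond the simple-spectrum theorem is the citation of the correct $\mathbb{Z}^d$-version of ``positive entropy implies countable Lebesgue spectrum''; no new rank-one argument is needed, since all the rank-one structure was already exploited to produce the cyclic vector $f$. This is the main (modest) obstacle: one must make sure that the Sinai/Rokhlin--Sinai machinery used is in place for measure-preserving $\mathbb{Z}^d$-actions, which indeed is standard.
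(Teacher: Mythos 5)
Your argument is correct and is essentially the same as the paper's: the paper also assumes $h_d(T)>0$, invokes Sinai's theorem for $\BZ^d$ actions (via Ornstein--Weiss) to obtain a Bernoulli factor, and notes that its countable Lebesgue spectrum contradicts the simple spectrum established in the preceding theorem. No substantive difference.
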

\begin{proof}
Again, Baxter's argument from \cite{Bax} holds with no changes. If
 $h_d(T)>0$, then by Sinai's theorem for $\BZ^d$ actions (see \cite{OW}) it follows that $T$
has a Bernoulli factor $T'$.
Any Bernoulli $\BZ^d$ action $T'$ has countable Lebesgue spectrum, and in particular, non-simple spectrum.This would imply that $T$ has nonsimple spectrum.
\end{proof}

In the case $d=1$, Baxter shows that rank one transformations with interval tower shapes are stacking rank one transformations, also with interval tower shapes.  The fairly degenerate towers in Example~\ref{simple} can also easily be chosen to be stacking.  In order to generalize Baxter's proof to $\BZ^d$, $d>1$, however, we need to impose one extra condition on the
sequence  $\cR$ of shapes. The class of rank one actions that we consider is the following.

\begin{defn}
We say a $\BZ^d$ action
$T$  is {\em F\o lner rank one} if it is $\cR$ rank one for some
F\o lner sequence $\cR$ of shapes.
\end{defn}

The following result generalizes Baxter's $d=1$ result to the case of F\o lner rank one $\BZ^d$ actions, $d>1$, adding one additional feature:  the tower shapes can be exactly preserved.

\begin{thm}\label{unitile}
Let $\cR=\{R_k\}$ be a F\o lner sequence in $\BZ^d$ with
${\vec 0}\in R_k$ for all $k$.
Let
$T$ be an $\cR=\{R_k\}$-rank one $\BZ^d$
action.
 Then there exists a sequence of $T$-towers  $\cQ_k$  with  shape
$R_k$ so that $\cQ_k\le\cQ_{k+1}$ for all $k$ and $\cQ_k\rightarrow\epsilon$. In particular, $T$ is stacking $\cR$ rank one.
\end{thm}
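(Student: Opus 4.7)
The plan is to inductively realize each $\cQ_k$ as a sub-tower of a sufficiently fine reference tower $\cP_{j(k)}$ drawn from the original rank-one sequence. The basic tool I will use is a F\o lner packing lemma, immediate from (\ref{vanhove}): given any shape $R$ and any $\cP_j$ with $|R_j|\gg|R|$, there is a packing $\{R+\vec s:\vec s\in S\}$ of $R_j$ (i.e.\ the translates $R+\vec s\subset R_j$ are pairwise disjoint) with $|S|\,|R|/|R_j|\ge 1-\delta$ for any preset $\delta>0$. Setting $B_k^{(Q)}:=\bigsqcup_{\vec s\in S}T^{\vec s}B_j$ produces a $T$-tower of shape $R$ that is $\cP_j$-measurable, with error at most $\delta+\mu(E_j)$.

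\textbf{Nested packings give exact refinement.} The key structural observation I will exploit is that packings at different scales can be chosen compatibly. Fix a sub-packing $S'_{k+1}\subset R_{k+1}$ of $R_{k+1}$ by translates of $R_k$ (so $R_k+\vec s'\subset R_{k+1}$ for $\vec s'\in S'_{k+1}$, with the translates pairwise disjoint), and a packing $S_{k+1}\subset R_j$ of $R_j$ by translates of $R_{k+1}$. Then $S_k:=\{\vec s'+\vec t:\vec s'\in S'_{k+1},\,\vec t\in S_{k+1}\}$ is itself a packing of $R_j$ by $R_k$'s, nested inside $S_{k+1}$. The resulting sub-towers $\cQ_k,\cQ_{k+1}$ of $\cP_j$ will satisfy $\cQ_k\le\cQ_{k+1}$ exactly: each level $T^{\vec v}B_k^{(Q)}$ of $\cQ_k$ decomposes as $\bigsqcup_{\vec s'\in S'_{k+1}}T^{\vec v+\vec s'}B_{k+1}^{(Q)}$, a disjoint union of $\cQ_{k+1}$-levels (valid precisely because $R_k+\vec s'\subset R_{k+1}$); and the error of $\cQ_k$ equals the error of $\cQ_{k+1}$ together with the $\cQ_{k+1}$-levels at positions in $R_{k+1}\setminus\bigcup_{\vec s'\in S'_{k+1}}(R_k+\vec s')$. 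Iterating, any finite initial segment $\cQ_1\le\cdots\le\cQ_N$ can be built inside a single sufficiently large $\cP_{j(N)}$.

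\textbf{Passing to all $k$ --- the main obstacle.} Since $|R_k|\to\infty$, no single $\cP_j$ can host the whole sequence, so I will proceed in blocks $[N_{l-1}+1,N_l]$ built against progressively larger reference towers $\cP_{j(l)}$. To secure $\cQ_k\to\epsilon$, I will demand that $\cP_{j(l)}$ approximate a fixed countable dense family $\{A_m\}\subset\cA$ within $1/l$, available from $\cP_j\to\epsilon$. The hard part will be the transition between blocks: $\cQ_{N_l}$ is $\cP_{j(l)}$-measurable while $\cQ_{N_l+1}$ is $\cP_{j(l+1)}$-measurable, and the two reference towers bear no a priori structural relation. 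My approach will be to choose $j(l+1)$ so large that $\cP_{j(l+1)}$ near-refines $\cQ_{N_l}$ to arbitrarily small error in $d$ (using $\cP_j\to\epsilon$), and then to convert this near-refinement into an exact one by carefully restricting the sub-base of $\cP_{j(l+1)}$ so that the $\cQ_{N_l}$-labels along its $R_{j(l+1)}$-column are as prescribed, absorbing mismatched mass into the error set while keeping the $R_{j(l+1)}$-packing by $R_{N_l+1}$-tiles tight enough for $\cQ_{N_l+1}$ to have small error. This transition argument, whose bookkeeping is delicate because the number of possible $\cQ_{N_l}$-column patterns is large, is the technical heart of the proof and is carried out in Section~\ref{last}.
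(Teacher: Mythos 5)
There are two genuine gaps here, and they sit exactly where the real work of the theorem lies. First, your ``F\o lner packing lemma'' is not immediate from (\ref{vanhove}) and is not true in the stated generality: the F\o lner condition controls the ratio of boundary to bulk of the sets $R_j$, but says nothing about whether translates of an \emph{arbitrary} earlier shape $R_k$ can be packed into $R_j$ with density $1-\delta$; Theorem~\ref{unitile} allows arbitrary F\o lner shapes, and a generic finite set need not pack large sets with density close to $1$. The paper never needs such a combinatorial packing: it locates the copies of $R_k$ inside $R_\ell$ \emph{measure-theoretically}, taking as stacking positions those levels of $\cP_\ell$ that are more than half covered by the base of the original tower $\cP_k$ (condition (\ref{best}) plus Lemma~\ref{stackup}); disjointness of the resulting translates then comes from $\cP_k$ being a tower, and the density of the packing is automatic because $\cP_\ell\rightarrow\epsilon$. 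This choice also repairs a second defect of your construction: a sub-tower of $\cP_{j(l)}$ obtained from an arbitrary packing has atoms that are prescribed unions of whole columns of $\cP_{j(l)}$-levels, so the fact that $\cP_{j(l)}$ approximates a dense family does not transfer to it; you get $\cQ_k\rightarrow\epsilon$ only if each new tower is close in $d$ to the original $\cP_k$, which is what Lemmas~\ref{needgeom} and~\ref{Cauchy} provide.

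Second, the step you yourself identify as ``the technical heart'' --- making $\cQ_{N_l}\le\cQ_{N_l+1}$ \emph{exactly} when the two are built inside different reference towers, by ``absorbing mismatched mass into the error set'' --- is precisely the part you have not carried out, and the sketch you give (restricting the sub-base of $\cP_{j(l+1)}$ so that its columns carry prescribed $\cQ_{N_l}$-labels) is where such attempts typically fail: a near-refinement in $d$ cannot in general be upgraded to an exact refinement without changing the towers themselves. The paper's route is structurally different and avoids the block transition altogether: it fixes, once and for all, a stacking set $I_k\subseteq R_{k+1}\setminus\partial_{R_k}(R_{k+1})$ realizing an approximate copy of $\cP_k$ inside $\cP_{k+1}$, then \emph{corrects} every tower by the doubly indexed scheme $\cP_{k,\ell}=(\cP_{k+1,\ell-1})_{I_k}$. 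Because the extraction map $\cQ\mapsto\cQ_J$ is a contraction for $d$ (Lemma~\ref{metric}) and the approximation errors $\delta_k$ are summable, each $\cP_{k,\ell}$ is Cauchy in $\ell$; the limits $\cQ_k$ have shape $R_k$, satisfy $\cQ_k=(\cQ_{k+1})_{I_k}$ exactly (so $\cQ_k\le\cQ_{k+1}$ with no error absorption), and satisfy $d(\cQ_k,\cP_k)\le\sum_{j\ge k}\delta_j\rightarrow 0$, which yields $\cQ_k\rightarrow\epsilon$. None of these ingredients (the half-covering criterion, the contraction lemma, the limiting correction) appears in your proposal, and without them the plan does not close.
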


The proof of Theorem~\ref{unitile} appears in Section~\ref{last}.

\section{Growth conditions and directional entropy}\label{s:main}

In this section we consider rectangular rank one $\BZ^d$ actions.
Let $\vn=(n_1,n_2,\dots,n_d)\in\BZ^d$. We say $\vec n\ge\vec m$ if
$n_i\ge m_i$ for all $i=1,\dots, d$. In this case we define a
shape, called a {\em rectangle}, by
\begin{equation}
[\vm,\vn]=\{\vv\in\mathbb Z^d: \vm\le\vv\le\vn\}.
\end{equation}
This definition of a rectangle is sufficiently general that the degenerate tower  shapes in Example~\ref{simple} are also rectangles.  The definition given below of rectangular rank one actions includes a condition on the rectangles which guarantees that they will have the same dimension as the acting group.  The condition also provides structure which is both natural from the point of ergodic theory and necessary for some of our arguments.

\begin{defn}
We say $T$ is {\em rectangular rank one} if it is $\cR$ rank one for a
F\o lner sequence $\cR$ of rectangles.
\end{defn}

Note that sequence $\cR=\{R_1,R_2,\dots\}$ of rectangles
$R_j=[\vm_j,\vn_j]$ in $\BZ^d$ is a F\o lner sequence if and only if
for any $\vec w>\vec 0$ one has $\vec w_j=(w^j_1,w^j_2,\dots,w^j_d):=\vn_j-\vm_j>\vec w$
for all $k$ sufficiently large (we say
${\vec w}_j\rightarrow+\infty$).
The first result requires no additional restrictions on the geometry of the rectangles.

\begin{thm}\label{all}
Let $T$ be a rectangular rank one $\mathbb Z^d$ action.  Then there is a one dimensional subspace $V$ of $\mathbb R^d$ so that $h_1(T,V)=0$.
\end{thm}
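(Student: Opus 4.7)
The plan is to use a pigeonhole argument to pick a favorable coordinate direction, reduce the one-dimensional directional entropy to a $\BZ$-subaction entropy, and then bound the latter using the rigid tower structure.

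First, I would apply Theorem~\ref{unitile} to assume the tower sequence $\cP_k$, with shapes $R_k$ and side lengths $w^k_1,\dots,w^k_d$, is refining with $\cP_k\to\epsilon$. For each $k$ pick a coordinate $i_k$ with $w^k_{i_k}=\max_j w^k_j$. By pigeonhole some $i$ occurs as $i_k$ for infinitely many $k$; passing to this subsequence (which remains refining and generating) and relabeling coordinates, I may assume $w^k_d=\max_j w^k_j$ for all $k$. Set $V=\mathrm{span}(\vec e_d)$.

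Unpacking Milnor's definition for this rational $V$, the lattice slab factors as
\begin{equation*}
S(V,t,m)\cap\BZ^d = \bigl([0,t]\cap\BZ\bigr)\vec e_d + \bigl([-m/2,m/2]^{d-1}\cap\BZ^{d-1}\bigr),
\end{equation*}
so $\cP_{V,t,m}=\bigvee_{s=0}^{t}T^{-s\vec e_d}\cP^{(m)}$ with $\cP^{(m)}=\bigvee_{\vec w'\in [-m/2,m/2]^{d-1}\cap\BZ^{d-1}}T^{-\vec w'}\cP$. Thus $h_1(T,V,\cP,m)=h(T^{\vec e_d},\cP^{(m)})$, and taking the supremum over $\cP$ and $m$ (which sweeps out all finite partitions of $X$, since $\cP^{(1)}=\cP$) yields $h_1(T,V)=h(T^{\vec e_d})$. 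Because $\cP_k$ is refining with $\cP_k\to\epsilon$, Kolmogorov--Sinai type continuity gives $h(T^{\vec e_d})=\lim_k h(T^{\vec e_d},\cP_k)$, so it suffices to show this limit is zero.

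Fix $k$ and count atoms of the $t$-step join $\bigvee_{j=0}^{t}T^{-j\vec e_d}\cP_k$. For a tower point $x\in T^{\vec v}B_k$ with $\vec v=(v_1,\dots,v_d)\in R_k$, the name is determined by $\vec v$---shift $v_d$ down each step, inserting $\vec\epsilon$ when out of range---so there are exactly $|R_k|$ tower names, contributing at most $\log|R_k|$ in total (an $O(1)$ in $t$). For error points $x\in E_k$ the $T^{\vec e_d}$-orbit's contact with the tower is rigid: each interior visit must enter at top level $v_d=w^k_d$, decrement $v_d$ by one each step, and exit at $v_d=0$, so has length exactly $w^k_d+1$ and is parametrized by its start time together with its lateral coordinates $(v_1,\dots,v_{d-1})\in\prod_{j<d}[0,w^k_j]$. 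Summing $\binom{t+1}{r}(W^\perp)^r$ over $r\leq\lfloor(t+1)/(w^k_d+1)\rfloor$, with $W^\perp=|R_k|/(w^k_d+1)$, and allowing an $O(|R_k|)$ slack for the two possibly truncated boundary visits, bounds the number of error names by $\exp\!\bigl(O((t/w^k_d)\log|R_k|)\bigr)$. Applying Lemma~\ref{shieldslem} to the error atoms (total measure $\leq\mu(E_k)$) then bounds their entropy by $\mu(E_k)\cdot O((t/w^k_d)\log|R_k|)+O(1)$; dividing by $t$ and letting $t\to\infty$ gives $h(T^{\vec e_d},\cP_k)\leq\mu(E_k)\log|R_k|/w^k_d$.

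To finish, because $w^k_d=\max_j w^k_j$ one has $\log|R_k|=\sum_j\log(w^k_j+1)\leq d\log(w^k_d+1)$, so $\log|R_k|/w^k_d\to 0$ as $w^k_d\to\infty$ (which holds by the F\o lner property). Hence $h(T^{\vec e_d},\cP_k)\to 0$, giving $h_1(T,V)=h(T^{\vec e_d})=0$. The combinatorial count of error names is the main obstacle: it hinges on the rigidity that interior tower visits are complete deterministic column traversals of length exactly $w^k_d+1$, which is what lets the per-visit combinatorial cost $\log|R_k|$ amortize against the traversal time $w^k_d$.
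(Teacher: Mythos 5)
Your overall route is genuinely different from the paper's and its skeleton is sound: identifying $V=\mathrm{span}(\vec e_d)$ with the (pigeonholed) longest-side coordinate, the identity $h_1(T,V)=h(T^{\vec e_d})$ obtained by unpacking $S(V,t,m)$, and the use of Theorem~\ref{unitile} plus Kolmogorov--Sinai continuity to reduce to $\lim_k h(T^{\vec e_d},\cP_k)$ are all legitimate (the paper instead never passes to the sub-action: it bounds $H(\cP_{V,t,m})$ directly against a much finer tower $\cP_j$, splits into good and bad levels, applies Lemma~\ref{shieldslem}, and lets $j\to\infty$ along intermediate times $t_j=\sqrt{\ell_j\log\ell_j}$). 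However, your counting step contains a genuine error: for a tower point $x\in T^{\vec v}B_k$ the $\cP_k$-name along the $\vec e_d$-orbit is \emph{not} determined by $\vec v$, and there are far more than $|R_k|$ such names. The Rohlin structure controls the orbit only while it remains in the column; once it exits through the extreme $d$-level nothing forces it into $E_k$ (``inserting $\vec\epsilon$ when out of range'' is unjustified): it may immediately re-enter the tower at an extreme level with arbitrary lateral coordinates and repeat this many times within the window, exactly as in your error-point analysis. Since tower points carry measure $1-\mu(E_k)\approx 1$, bounding their contribution by $\log|R_k|=O(1)$ in $t$ is a real gap, and the dichotomy that produces the factor $\mu(E_k)$ in your final bound $\mu(E_k)\log|R_k|/w^k_d$ is not valid. (Indeed, were your tower-point claim true, it would give zero entropy in the short direction as well whenever $\mu(E_k)\to 0$ fast enough, which Rudolph's example shows cannot follow from the tower structure alone.)

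The good news is that the repair is already contained in your error-point analysis, whose rigidity statement is correct: an orbit can enter the tower from outside only at an extreme $d$-level, so every complete traversal has length exactly $w^k_d+1$ and is determined by its start time and its lateral coordinates. This holds for the orbit of \emph{every} point, not just points of $E_k$. Applying your count $\sum_{r\le (t+1)/(w^k_d+1)}\binom{t+1}{r}(W^\perp)^r$, with an $O(\log|R_k|)$ allowance for the truncated first and last visits, to \emph{all} names bounds the number of atoms of $\bigvee_{j=0}^{t}T^{-j\vec e_d}\cP_k$ by $\exp\bigl(C\,(t/w^k_d)\log|R_k|+O(\log|R_k|+\log t)\bigr)$, hence
\begin{equation*}
h(T^{\vec e_d},\cP_k)\ \le\ C\,\frac{\log|R_k|}{w^k_d}\ \le\ C\,\frac{d\log(\ell_k+1)}{\ell_k}\ \longrightarrow\ 0,
\end{equation*}
with no Shields-type lemma and no $\mu(E_k)$ factor needed. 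With that correction your proof is complete; it rests on the same quantitative mechanism as the paper's (the longest side satisfies $\log\ell_k\ll\ell_k$, so one can amortize $\log|R_k|$ over a traversal of length $\ell_k$), implemented as a direct name count for the sub-action rather than the paper's estimate of $H(\cP_{V,t,m})$ via the auxiliary partition $\cQ_j$ and Lemmas~\ref{good} and~\ref{bad}.
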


On the other hand, given Rudolph's example \cite{Ru3}, we know that there do exist rectangular rank one actions with at least one direction with positive directional entropy.  Rudolph's construction requires the long sides of the rectangles to grow super-exponentially as a function of the short sides.  Our next result shows that one can't have directional entropy in the absence of exponential growth of the longest side relative to the shortest side.


Given a sequence $\cR$ of rectangles let
\begin{equation}
s_j=\min_{i=1,\dots,d}w_i^j\text{\ and\ }\ell_j=\max_{i=1,\dots,d}
w_i^j.
\end{equation}

\begin{defn}
We say that a F\o lner sequence $\cR$ of rectangles has
{\em subexponential eccentricity} if
\begin{equation}\label{subexp}
\limsup_{j\rightarrow\infty}\frac{\log(\ell_j)}{s_j}<\infty.
\end{equation}
\end{defn}

The following is our main result.

\begin{thm}\label{main}  Let $\cR$ be a F\o lner sequence of rectangles with subexponential eccentricity.  If  $T$ is an $\cR$ rank one $\BZ^d$ action,
 then $h_n(T,V)=0$ for each $n$-dimensional subspace $V$, for all $1\le n\le d$.
\end{thm}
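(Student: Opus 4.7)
The plan is to use Theorem~\ref{unitile} to get a refining generating sequence of towers, apply Lemma~\ref{structure} to reduce to a single--partition entropy estimate, and then control the entropy via the cutting-and-stacking structure together with the subexponential eccentricity hypothesis.

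Since $\cR$ is a F\o lner sequence, Theorem~\ref{unitile} yields a refining sequence $\cP_k\le\cP_{k+1}$ of $T$-towers of shape $R_k$ with $\cP_k\to\epsilon$. By Lemma~\ref{structure}, it suffices to show, for each fixed $k$ and each $m>0$,
\begin{equation*}
\lim_{t\to\infty}\frac{1}{t^n}\,H\big((\cP_k)_{V,t,m}\big)=0.
\end{equation*}
Fix $k$ and $m$, write $S=S(V,t,m)\cap\BZ^d$, and set $R_j\ominus S=\{\vec u\in R_j:\vec u-S\subseteq R_j\}$. For each $t$ I will choose an auxiliary index $j=j(t)\ge k$ with $s_{j(t)}$ large compared with $t+m$ and apply the refinement $\cP_k\le\cP_j$. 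On the ``interior'' set $X_{\mathrm{int}}:=\bigcup_{\vec u\in R_j\ominus S}T^{\vec u}B_j$, every point $x\in T^{\vec u}B_j$ has its full $\cP_k$-name on $S$ determined by $\vec u$ via the refinement map $\pi_{j,k}$ which labels each $j$-tower level with the unique $\cP_k$-atom containing it, namely $\vec w\mapsto\pi_{j,k}(\vec u-\vec w)$. Splitting $X$ into interior, $j$-tower boundary, and $j$-tower error set, and applying Lemma~\ref{shieldslem}, produces an estimate of the shape
\begin{equation*}
H\big((\cP_k)_{V,t,m}\big)\le\log N_{\mathrm{int}}(t,j)+\mu(X_{\mathrm{bdry}})\cdot|S|\cdot\log(|R_k|+1)+O(1),
\end{equation*}
where $N_{\mathrm{int}}(t,j)$ counts the distinct $\cP_k$-patterns on $S$ that can arise from interior positions.

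The subexponential eccentricity hypothesis enters on two fronts. First, the F\o lner/rectangle geometry shows $\mu(X_{\mathrm{bdry}})\lesssim(t+m)/s_j$, so $s_j\gg t+m$ drives $\mu(X_{\mathrm{bdry}})\to 0$. Second, $\log\ell_j\le Cs_j$ yields $\log|R_j|\le dCs_j$, which bounds the crude count $N_{\mathrm{int}}\le|R_j\ominus S|$. For $n\ge 2$, a choice like $s_j(t)\sim t^{(n+1)/2}$ satisfies both constraints and drives both terms of the estimate to zero after division by $t^n$. The main obstacle is the case $n=1$, where the requirements $s_j\gg t$ and $\log|R_j|=o(t)$ conflict under this naive bound on $N_{\mathrm{int}}$. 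To surmount it, the plan is to sharpen $N_{\mathrm{int}}$ by using that $\pi_{j,k}$ is not an arbitrary labeling: the cutting-and-stacking construction builds $R_j$ as an approximately periodic tiling by translates of $R_k$ (and of intermediate shapes) separated by spacers, so that $\pi_{j,k}$ is essentially periodic at the scale of $R_k$. The number of distinct $S$-windows is then controlled by $|R_k|$ together with a polynomial-in-$t$ correction counting how many spacer interfaces an $S$-window can straddle inside $R_j$; subexponential eccentricity bounds the density of these interfaces, so that the correction is $o(t)$ uniformly in the choice of $V$, giving $\log N_{\mathrm{int}}=o(t)$ and closing the argument.
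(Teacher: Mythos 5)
Your reduction of the problem (Theorem~\ref{unitile} to get a refining sequence, Lemma~\ref{structure} to pass to a single partition $\cP_k$, the split of the $j$-th tower into interior levels, boundary levels and error set, and Lemma~\ref{shieldslem} for the boundary contribution) is exactly the skeleton of the paper's proof, and your treatment of $n\ge 2$ with the crude count $N_{\mathrm{int}}\le |R_j|$ is essentially the published estimate (Lemmas~\ref{good} and~\ref{bad}). The genuine gap is your step for $n=1$. Nothing in the hypotheses, nor in the stacking structure supplied by Theorem~\ref{unitile}, makes the refinement map $\pi_{j,k}$ ``essentially periodic'': the stacking set of $R_k$-translates inside $R_j$ is an arbitrary $R_k$-separated set and the spacer levels may be placed arbitrarily, so a window of length $t$ in direction $V$ crosses on the order of $t$ interfaces, each introducing an independent choice of offset, and the number of distinct interior windows can a priori grow exponentially in $t$. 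Subexponential eccentricity is a statement about the aspect ratios of the rectangles $R_j$ only; it says nothing about the internal arrangement of subtowers, hence gives no bound on your ``interface density''. So the claimed sharpening $\log N_{\mathrm{int}}=\log|R_k|+o(t)$ is unsupported, and with it the case $n=1$ of the theorem is left unproved.

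The missing idea is that no sharpened count is needed: one keeps $\log N_{\mathrm{int}}\le \log|R_j|\le d\log \ell_j$ and instead chooses the time scale more shrewdly. The paper fixes $j$ and evaluates the entropy along $t_j=\sqrt{s_j\log \ell_j}$, the geometric mean of the two competing quantities; then $\log\ell_j/t_j$ and $t_j/s_j$ are both equal to $\sqrt{\log\ell_j/s_j}$, and this is precisely where the eccentricity hypothesis is invoked, via (\ref{wklims2}) and (\ref{weaklims1}): the argument uses that $\log\ell_j$ is small relative to $s_j$, not merely the one-sided inequality $\log\ell_j\le Cs_j$ that you extracted, which is indeed too weak and is what creates your apparent conflict at $n=1$. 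Since by (\ref{dent4}) the limit in $t$ of $t^{-n}H((\cP_k)_{V,t,m})$ exists, it suffices to verify it along the sequence $t_j$; parametrizing by $j$ in this way also repairs the minor defect in your scheme of choosing $j(t)$ with $s_{j(t)}\sim t^{(n+1)/2}$, since the sequence $s_j$ may have arbitrarily large gaps and such an index need not exist for every $t$.
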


\noindent The geometric idea underlying both proofs is the same.  In the next section we describe the general set up and prove two key lemmas used in both proofs.  This will help make the differences in the arguments, as well as the role of sub-exponential growth, clearer.

\subsection{Geometric preliminaries}
For any rectangular rank one $\BZ^d$ action $T$ we can find a F\o lner sequence $\cR=\{R_k\}$ of
rectangles, and towers  $\cP_k$ of shape $R_k$, such that $\cP_k\rightarrow\epsilon$.
By Theorem~\ref{unitile} we may assume that
\begin{equation}\label{imp}
\cP_k\le\cP_{k+1}.
\end{equation}

Let $V$ be an $n$ dimensional subspace of $R^d$.  By Lemma~\ref{structure} to prove $h_n(T,V)=0$ we know it suffices to show (\ref{goal}) for $P=P_k$ for all $k\ge 1$ and $m>0$.


Fix $m>0$ and $k\ge 1$ and set $\cP=\cP_k$.  Recall that for all $t$, the alphabet of $\cP_{V,t,m}$ will be
 \begin{equation*}
\bigg(R_k\cup\{\vec e\}\bigg)^{S(V,t,m)}.
\end{equation*}

Fix some $j>k$ and then fix $t$ (typically much smaller than $s_j$).
 Call a level
$T^{\vec v}B_j$ of $\cP_j$
{\it good}
if $\vec v\notin\partial_{S(V,t,m)}(R_j)$. Good levels
have the property that $\vec v+S(V,t,m)\subseteq R_j$
for all $\vec v\in R_j$. It follows from (\ref{imp}) that these are precisely those levels of $\cP_j$ that are contained in a single atom of $\cP_{V,t,m}$.  In other words, for all points $x,y$ in such a level, $P_{V,t,m}(x)=P_{V,t,m}(y)$.

Define a set $Y_j\subseteq X$ to be the complement of the union of good levels of $\cP_j$, together with the error set $E_j$.
 Let
\begin{equation*}
\cP^*_j(x)=\begin{cases} \cP_j(x) \text{ if $x\in Y_j^c$, and}\\
* \text{ if $x\in Y_j$}
\end{cases},
\end{equation*}
and let $\cQ_j=\cP_j^*\vee\cP_{V,t,m}$.
Clearly
\begin{equation}
H(\cP_{V,t,m}) \le H(\cQ_j).\label{wkest1}
\end{equation}

To compute $H(\cQ_j)$ we let $G_j$ be the set of atoms of $\cQ_j$ that do not contain the symbol $*$.  Then
\begin{eqnarray}
H(\cQ_j)&=& -\sum_{P\in G_j}\mu(P)\log\mu(P) \label{wkest2}\\
&&\ \ \ \ \ \ \ \ \ \ \ \ -\sum_{P\in G_j^c}\mu(P)\log\mu(P)\label{wkest3}
\end{eqnarray}

The next two lemmas give estimates on (\ref{wkest2}) and (\ref{wkest3}).

\begin{lem}\label{good}
\begin{eqnarray}
-\sum_{P\in G_j}\mu(P)\log\mu(P)&\le&-\log(1-\mu(E_j))+\sum\log w_i^j. \label{wkest6}
\end{eqnarray}
\end{lem}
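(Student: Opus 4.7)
The plan is to give a transparent combinatorial description of $G_j$ and then evaluate the sum directly, avoiding any need for Shields' inequality.

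First, I would pin down $G_j$ precisely. By construction $\cP^*_j$ equals $*$ on $Y_j$ and equals $\cP_j$ on $Y_j^c$, so an atom of $\cQ_j=\cP^*_j\vee\cP_{V,t,m}$ avoids the symbol $*$ iff it lies in $Y_j^c$, i.e.\ inside some good level $T^{\vec v}B_j$ of $\cP_j$. The paragraph preceding the lemma observes that if $\vec v\notin\partial_{S(V,t,m)}(R_j)$, then (using the refinement (\ref{imp})) the whole level $T^{\vec v}B_j$ is contained in a single atom of $\cP_{V,t,m}$, so joining with $\cP_{V,t,m}$ does \emph{not} subdivide good levels. Hence $G_j$ is exactly the collection of good levels of $\cP_j$, and each such atom has the same measure $\mu(B_j)=(1-\mu(E_j))/|R_j|$.

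Second, writing $g_j=|G_j|$ and $\beta=g_j\mu(B_j)$, we have $\beta\le 1-\mu(E_j)\le 1$, and the sum collapses to a single term:
\begin{equation*}
-\sum_{P\in G_j}\mu(P)\log\mu(P)=-g_j\mu(B_j)\log\mu(B_j)=-\beta\log\mu(B_j).
\end{equation*}
Splitting $-\log\mu(B_j)=\log|R_j|-\log(1-\mu(E_j))$ gives
\begin{equation*}
-\beta\log\mu(B_j)=\beta\log|R_j|-\beta\log(1-\mu(E_j)).
\end{equation*}
Both $\log|R_j|\ge 0$ and $-\log(1-\mu(E_j))\ge 0$, and $\beta\le 1$, so each factor of $\beta$ may be dropped, producing
\begin{equation*}
-\sum_{P\in G_j}\mu(P)\log\mu(P)\le\log|R_j|-\log(1-\mu(E_j)).
\end{equation*}
Since $|R_j|=\prod_{i=1}^d(w_i^j+1)$, this yields $-\log(1-\mu(E_j))+\sum_i\log(w_i^j+1)$, which is the stated bound modulo the standard normalization between edge-count and edge-length for $w_i^j$.

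There is no substantive obstacle; the one thing that requires care is the identification of $G_j$ as exactly the set of whole good levels of $\cP_j$, which relies simultaneously on the good-level condition $\vec v\notin\partial_{S(V,t,m)}(R_j)$ and on (\ref{imp}). Once this is established, the uniform measure of the atoms reduces the entropy-type sum to a single log, and the bound follows from nothing more than splitting that logarithm and applying $\beta\le 1$ to two non-negative quantities.
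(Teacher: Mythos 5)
Your proof is correct and follows essentially the same route as the paper's: identify the atoms of $G_j$ with whole good levels of $\cP_j$ (via the boundary condition and (\ref{imp})), note they all have measure $(1-\mu(E_j))/|R_j|$ with total mass at most $1$, and split the logarithm. The only discrepancy, your $\sum_i\log(w_i^j+1)$ versus the stated $\sum_i\log w_i^j$, is the same points-versus-edge-length normalization the paper itself elides (its proof writes $\mu(P)=(1-\mu(E_j))/\prod_i w_i^j$), and it is harmless for the subsequent limits.
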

\begin{proof}
Note first that if a point lies in an atom of $G_j$, the point lies in a good level of $\cP_j$.  Therefore, a generous upper bound for the cardinality of $G_j$ is obtained by assuming that every level gives rise to a distinct atom of $P(V,t,m)$.
Thus we have
\begin{equation*}
|G_j|\le|R_j|-|\partial_{S(V,t,m)}R_j|\le\prod_{i=1}^d w_i^j.
\end{equation*}
On the other hand, for $P\in G_j$
\begin{equation*}
\mu(P)=(1-\mu(E_j))\frac{1}{\prod w_i^j}\le \frac{1}{\prod{w_i^j}}
\end{equation*}
and the result follows.
\end{proof}
We note that, unlike the next result, the proof of Lemma~\ref{good} does not depend on our choice of $V$.

\begin{lem}\label{bad}
\begin{equation}\label{wkelabshlds}
-\sum_{P\in G_j^c}\mu(P)\log\mu(P) \le 2\mu(Y_j)\vert S(V,t,m)\vert\log\vert R_k\vert-\mu(Y_j)\log\mu(Y_j)
\end{equation}
\end{lem}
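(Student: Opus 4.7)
The plan is to apply Shields' lemma (Lemma~\ref{shieldslem}) to the collection $G_j^c$ together with a crude count of how many atoms this collection can contain.

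First I would identify $G_j^c$ as the set of atoms of $\cQ_j = \cP_j^* \vee \cP_{V,t,m}$ on which $\cP_j^*$ takes the value $*$. Since $\cP_j^*(x)=*$ exactly when $x\in Y_j$, these atoms partition $Y_j$, so $\sum_{P\in G_j^c}\mu(P)=\mu(Y_j)$. Invoking Lemma~\ref{shieldslem} with $M = G_j^c$ and $\beta = \mu(Y_j)$ then gives
\begin{equation*}
-\sum_{P\in G_j^c}\mu(P)\log\mu(P) \le \mu(Y_j)\log|G_j^c| - \mu(Y_j)\log\mu(Y_j).
\end{equation*}
So the remaining task reduces to the clean combinatorial bound $\log|G_j^c| \le 2|S(V,t,m)|\log|R_k|$.

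Next I would count $|G_j^c|$. On $Y_j$ the partition $\cP_j^*$ is trivial (every point has label $*$), so atoms in $G_j^c$ are distinguished only by the label assigned by $\cP_{V,t,m}$. By definition
$\cP_{V,t,m} = \bigvee_{\vw \in S(V,t,m)\cap\BZ^d}T^{-\vw}\cP$ where $\cP = \cP_k$ has alphabet $R_k\cup\{\vec\epsilon\}$ of size $|R_k|+1$. Hence the alphabet of $\cP_{V,t,m}$ has cardinality at most $(|R_k|+1)^{|S(V,t,m)|}$, giving $|G_j^c| \le (|R_k|+1)^{|S(V,t,m)|}$.

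Finally I would pass from $\log(|R_k|+1)$ to $2\log|R_k|$. Since $|R_k|\ge 2$ (it contains $\vec 0$ and $|R_k|\to\infty$), we have $|R_k|+1 \le |R_k|^2$, hence $\log(|R_k|+1)\le 2\log|R_k|$. Thus
\begin{equation*}
\log|G_j^c| \le |S(V,t,m)|\log(|R_k|+1) \le 2|S(V,t,m)|\log|R_k|,
\end{equation*}
and substituting into the Shields bound above completes the proof. The main (and really only) subtlety is noticing that on $Y_j$ the $\cP_j^*$ coordinate is redundant, so the combinatorial explosion of atoms comes entirely from $\cP_{V,t,m}$; the rest is a direct application of Lemma~\ref{shieldslem} and a routine estimate to absorb the ``$+1$'' from the error symbol.
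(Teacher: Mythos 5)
Your proof is correct and follows essentially the same route as the paper: apply Lemma~\ref{shieldslem} with $M=G_j^c$ and $\beta=\mu(Y_j)$, bound $|G_j^c|\le(|R_k|+1)^{|S(V,t,m)|}$ via the alphabet of $\cP_{V,t,m}$ on the $*$-labelled atoms, and absorb the ``$+1$'' using $\log(|R_k|+1)\le 2\log|R_k|$. The only difference is that you spell out the last absorption step, which the paper leaves implicit in ``and the result follows.''
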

\begin{proof}
Lemma~{\ref{shieldslem}} gives that (\ref{wkest3}) is less than or equal to
\begin{equation*}
\mu(Y_j)\log\vert G_j^c\vert -\mu(Y_j)\log\mu(Y_j).
\end{equation*}
Recall that $P=P_k$ and that the alphabet of $\mathcal Q_j$ restricted to $G^c_j$ is
\begin{equation*}
\{*\}\times\bigg(R_k\cup\{\vec e\}\bigg)^{S(V,t,m)}
\end{equation*}
Thus
\begin{equation*}
|G_j^c|\le (|R_k|+1)^{|S(V,t,m)|},
\end{equation*}
and the result follows.

\end{proof}

In the proof of both theorems we will show that given a particular choice of $V$ of dimension $n$, there is a sequence of times $t_j$ such that
\begin{equation*}
\limsup_{j\rightarrow\infty}\frac{1}{t_j}H(\cQ_j)=0.
\end{equation*}
By (\ref{wkest1}) we will have shown (\ref{goal}).
The previous two lemmas, together with the geometry of $V$ will determine the particular choice of $t_j$.

\subsection{Proof of Theorem~\ref{all}}

Suppose, without loss of generality, that for all $j\ge 1$, the maximal dimension of $R_j$ is in the direction $\vec e_1$, i.e. $\vec w_1^j=\ell_j$.  We will show that (\ref{goal}) holds for $n=1$ where $V$ is the one dimensional subspace of $\mathbb R^d$ spanned by $\vec e_1$.   In this case, for all $t\in\mathbb R$, $S(V,t,m)$ is a rectangle and $\vert S(V,t,m)\vert\le tm^{d-1}$. Therefore, by Lemma~\ref{bad},
\begin{equation}
-\sum_{P\in G_j^c}\mu(P)\log\mu(P)\le2\mu(Y_j)t m^{d-1}\log\vert R_k\vert-\mu(Y_j)\log\mu(Y_j).\label{wkelabshlds}
\end{equation}
Using (\ref{wkest2}), (\ref{wkest3}), (\ref{wkelabshlds}), and Lemma~\ref{good}
\begin{multline}
\frac{1}{t}H(\mathcal Q_j)\le
-\frac{\log(1-\mu(E_j))}{t}+
\sum_{i=1}^d \frac{\log(w_i^j)}{t}\\
+\mu(Y_j)\log\vert R_k\vert m^{d-1}-\frac{\mu(Y_j)\log\mu(Y_j)}{t}.\label{laststep1}
\end{multline}

Set, for each $j\in\mathbb N$,
\begin{equation}\label{weaktimes}
t_j=\sqrt{\ell_j\log(\ell_j)}.
\end{equation}
This implies the next two limits:
\begin{equation}\label{wklims1}
\lim_{j\rightarrow\infty}\frac{t_j}{\ell_j}=0,
\end{equation}
and
\begin{equation}\label{wklims2}
\lim_{j\rightarrow\infty}\frac{\log(\ell_j)}{t_j}=0.
\end{equation}

We now show
\begin{equation*}
\lim_{j\rightarrow\infty}H(\cP_{V,t_j,m})=0
\end{equation*}
by computing the limit of each summand of (\ref{laststep1}) separately.

The first term goes to $0$ due to the fact that $\cP_j\rightarrow\epsilon$.  The second term is clearly bounded above by $d\log(\ell_j)/t_j$,
which goes to $0$ by (\ref{wklims2}).

To compute the limit of the last two terms, we note that for all $t$
\begin{equation*}
\vert\partial_{S(V,t_j,m)}\vert\le t_j\prod_{k=2}^dw_j^k+\sum_{i=1}^dm\prod_{k=1,k\neq i}^d w_j^k
\end{equation*}
 Thus
 \begin{align*}
 \mu(Y_j)&\le\bigg[t_j\prod_{k=2}^dw_j^k+\sum_{i=1}^dm\prod_{k=1,k\neq i}^d w_j^k\bigg]
 \frac1{\vert R_j\vert}+\mu(E_j)\\
&=\frac{t_j}{\ell_j}+\sum_{i=2}^d\frac{m}{w_j^i}+\mu(E_j).
 \end{align*}

Using (\ref{wklims1}) for the first term, the fact that the shapes $\{R_k\}$ form a F\o lner sequence for the second term, and again, the fact  that $\cP_k\rightarrow\epsilon$
for the third, we have
 \begin{equation*}
 \lim_{j\rightarrow\infty}\mu(Y_j)=\lim_{j\rightarrow\infty}\mu(Y_j)\log\mu(Y_j)=0.
 \end{equation*}\qed

\subsection{Proof of Theorem~\ref{main}}
Fix $1\le n<d$ and let $V$ be an $n$-dimensional subspace of $\mathbb R^d$.  Recall that $\ell_j$ denotes the largest dimension, and $s_j$ the smallest dimension of $R_j$. It follows from (\ref{subexp}), the assumption of subexponential eccentricity, that by passing to a subsequence we can assume
\begin{equation}\label{subexp1}
\lim_{j\rightarrow\infty}\frac{\log(\ell_j)}{s_j}=\alpha<\infty.
\end{equation}

%

 For an arbitrary $V$ and $t$,
 \begin{equation*}
\vert S(V,t,m)\vert\le t^nm^{d-n},
 \end{equation*}
so by Lemma~\ref{bad},
\begin{equation*}
-\sum_{P\in G_j^c}\mu(P)\log\mu(P)\le2\mu(Y_j)t^n m^{d-n}\log\vert R_k\vert-\mu(Y_j)\log\mu(Y_j),
\end{equation*}
Again, using this together with (\ref{wkest2}), (\ref{wkest3}), and Lemma~\ref{good}, it
  follows that
 \begin{multline}
\frac{1}{t^n}H(\mathcal Q_j)\le
-\frac{\log(1-\mu(E_j))}{t^n}+
\sum_{i=1}^d \frac{\log(w_i^j)}{t^n}\\
+\mu(Y_j)\log\vert R_k\vert m^{d-n}-\frac{\mu(Y_j)\log\mu(Y_j)}{t^n},\label{laststep}
\end{multline}
and we consider each summand individually as time grows to infinity.

The argument for the first term is the same as in the proof of Theorem~\ref{all}.  We have already seen that choosing a  sequence of times
 $t=t_j$ that satisfy (\ref{wklims2}) is sufficient to guarantee that the second summand goes to $0$.
The computation of $\mu(Y_j)$, however, depends on the choice of $V$, and brings in a new necessary condition for the sequence of times to satisfy.   In particular, the size of $\vert\partial_{S(V,t,m)}\vert$
depends on the shape of $S(V,t,m)$, which in turn depends on $V$.  For an arbitrary $V$  and $t$,
\begin{equation*}
\vert \partial_{S(V,t,m)}R_j\vert\le\sum_{\text{$n$-tuples}}(t+m)^n\prod_{\{i\notin\text{the $n$-tuple}\}}w^j_i
\end{equation*}
and therefore
\begin{eqnarray}
\mu(Y_j)&\le&|\partial_{S(V,t,m)}R_j|\frac{1}{\vert R_j\vert}+\mu(E_j)\nonumber\\
&=&\sum\frac{t+m}{w_i^j}+\mu(E_j).\label{wkYmsr1}
\end{eqnarray}

To complete the proof using the same idea as in the proof of Theorem~\ref{all}, it suffices
to choose a sequence of times $t=t_j$ that simultaneously satisfy (\ref{wklims2}) and
\begin{equation}\label{weaklims1}
\lim_{j\rightarrow\infty}\frac{t_j}{s_j}=0.
\end{equation}
Note that together, these two limits require $t_j$ to grow faster than $\log(\ell_j)$ but slower than $s_j$.  But (\ref{subexp1}), the subexponential growth hypothesis, implies that the sequence of times
\begin{equation*}
t_j=\sqrt{s_j\log(\ell_j)}
\end{equation*}
satisfies both (\ref{wklims2}) and (\ref{weaklims1}).\qed

\section{The proof of of Theorem~\ref{unitile}}\label{last}

This proof follows the ideas in Baxter \cite{Bax}, together with an improvement in Lemma~\ref{needgeom} needed for the $\BZ^d$ case.
The proof applies more or less verbatim to
F\o lner rank one
actions of any amenable group $G$.

We begin with some definitions that will simplify our arguments.
Given shapes $R$ and $J$ in  $\BZ^d$
we say that $J$ is {\em $R$-separated} if
\begin{equation*}
(R+\vv_1)\cap(R+\vv_2)=\emptyset{\rm\ for\ all\ }\vv_1,\vv_2\in J{\rm\ with\ } \vv_1\not=\vv_2,
\end{equation*}
in which case
$R+J:=\cup_{\vv\in J}R+\vv$
 is a disjoint union.
We say that a shape $S$
 is a {\em stacking} of a shape $R$
if there exists an $R$-separated set $J$
so that $R+J\subseteq S$. We call $J$ an {\em $R$ stacking set for $S$}.

The purpose of a stacking is that it tells us how
a tower of shape $S$ can be put together out of
a tower of shape $R$. In particular, suppose $R$ and $S$ are shapes, with
$\vec 0\in R\subset S$, and suppose that
$J$ is an $R$ stacking set for $S$. Given a
tower $\cQ$ with shape $S$ and base $B$, let
$A=\cup_{\vec v\in J}T^{\vec v}B$.
Then $A$ is the base of tower $\cQ_J$ of shape $R$, defined by
$\cQ_J=\{T^{\vec v}A:\vec v\in R\}$,
such that $\cQ_J \le\cQ$.

\begin{lem}\label{metric}
Let $\cQ$ and $\cQ'$ be towers of shape $S$. Let $R$ be a shape
such that $J$ is an $R$ stacking set for $S$.
Then
\begin{equation}
d(\cQ_J,\cQ'_J)\le d(\cQ,\cQ').
\end{equation}
\end{lem}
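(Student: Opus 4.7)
The plan is to unwind the definitions of $\cQ_J$ and $\cQ'_J$ as measurable partitions, write out $d(\cQ_J,\cQ'_J)$ term by term using the definition of the metric $d$, and then bound each term by the corresponding atoms of $\cQ$ and $\cQ'$ using the elementary symmetric-difference inequality
\begin{equation*}
\Bigl(\bigcup_i X_i\Bigr)\triangle\Bigl(\bigcup_i Y_i\Bigr)\subseteq\bigcup_i (X_i\triangle Y_i).
\end{equation*}

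First I would write the base of $\cQ_J$ as $A=\bigsqcup_{\vec v\in J}T^{\vec v}B$ (disjoint because $J$ is $R$-separated and so in particular its translates of singletons are disjoint in $S$), and similarly $A'=\bigsqcup_{\vec v\in J}T^{\vec v}B'$. Then for each $\vec w\in R$ the level $T^{\vec w}A$ is the disjoint union $\bigsqcup_{\vec v\in J}T^{\vec w+\vec v}B$, where the indices $\vec w+\vec v$ lie in $R+J\subseteq S$ and are \emph{all distinct} as $(\vec w,\vec v)$ ranges over $R\times J$ (this is exactly the content of $J$ being an $R$ stacking set). Applying the symmetric-difference inequality level by level gives
\begin{equation*}
\mu(T^{\vec w}A\triangle T^{\vec w}A')\le\sum_{\vec v\in J}\mu(T^{\vec w+\vec v}B\triangle T^{\vec w+\vec v}B'),
\end{equation*}
and summing over $\vec w\in R$ yields
\begin{equation*}
\sum_{\vec w\in R}\mu(T^{\vec w}A\triangle T^{\vec w}A')\le\sum_{\vec u\in R+J}\mu(T^{\vec u}B\triangle T^{\vec u}B').
\end{equation*}

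Next I handle the error-set contribution. Letting $E,E'$ denote the error sets of $\cQ,\cQ'$ and $E_J,E'_J$ those of $\cQ_J,\cQ'_J$, the complement description gives
\begin{equation*}
E_J=E\sqcup\bigsqcup_{\vec u\in S\setminus(R+J)}T^{\vec u}B,\qquad E'_J=E'\sqcup\bigsqcup_{\vec u\in S\setminus(R+J)}T^{\vec u}B',
\end{equation*}
and the same symmetric-difference inequality gives
\begin{equation*}
\mu(E_J\triangle E'_J)\le\mu(E\triangle E')+\sum_{\vec u\in S\setminus(R+J)}\mu(T^{\vec u}B\triangle T^{\vec u}B').
\end{equation*}
Adding the two estimates, the sums over $R+J$ and over $S\setminus(R+J)$ combine into a single sum over all $\vec u\in S$, which together with the $\mu(E\triangle E')$ term is exactly $d(\cQ,\cQ')$.

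There is no real obstacle here beyond careful bookkeeping. The only subtle point worth emphasizing is why the double sum over $R\times J$ collapses to a single sum over $R+J$ with no overcounting: this is precisely the $R$-separatedness of $J$, which guarantees that the map $(\vec w,\vec v)\mapsto\vec w+\vec v$ from $R\times J$ into $S$ is injective. Everything else is the standard fact that the symmetric difference of unions is contained in the union of the symmetric differences.
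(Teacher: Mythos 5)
Your proof is correct and takes essentially the same route as the paper: decompose each level of $\cQ_J$ into a disjoint union of levels of $\cQ$ (the injectivity of $(\vec w,\vec v)\mapsto \vec w+\vec v$ on $R\times J$, i.e.\ $R$-separatedness, guaranteeing no overcounting and that at most $|S|$ levels are used), then apply $(\cup_i X_i)\triangle(\cup_i Y_i)\subseteq\cup_i(X_i\triangle Y_i)$; the paper simply phrases the count via measure preservation as $|J|\,|R|\,\mu(B\triangle B')\le |S|\,\mu(B\triangle B')$. If anything, your bookkeeping of the error sets of $\cQ_J$ and $\cQ'_J$ (absorbing the levels indexed by $S\setminus(R+J)$ into them) is slightly more careful than the paper's displayed computation, which writes the same symbol $\mu(E\triangle E')$ for the error sets of both the original and the derived towers.
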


\begin{proof}
Let $B$ and $B'$ denote the bases of $\cQ$ and $\cQ'$. By definition we have that
$A=\sum_{\vw\in J}T^\vw B$ and $A'=\sum_{\vw\in J}T^\vw B'$ are the
bases of $\cQ_J$ and $\cQ'_J$. Then
\begin{eqnarray*}
d(\cQ_J,\cQ'_J)&=&\ \sum_{\vv\in R}\mu(T^\vv A\triangle T^\vv A')+\mu(E\triangle E')\\ \nonumber
&=&|R|\, \mu(A\triangle A')+\mu(E\triangle E')\\ \nonumber
&=&|R|\, \mu(\cup_{\vw\in J}T^\vw B\triangle \cup_{\vw\in J}T^\vw B')+\mu(E\triangle E')\\ \nonumber
&\le & |J|\,|R|\,\mu(B\triangle B')+\mu(E\triangle E')\le |S|\,\mu(B\triangle B')+\mu(E\triangle E')\\  \nonumber
&=&d(\cQ,\cQ').
\end{eqnarray*}
\end{proof}

Let $\cP$ be a tower of shape $R$.
For $A\in \mathcal A$
let $I$ be the set of $a\in R$ so that
\begin{equation}\label{best}
\mu(A\cap P_a)>\frac{1}{2}\mu(P_a).
\end{equation}
Define $A(\cP)=\cup_{a\in I}P_a$.

\begin{lem}\label{stackup}
Suppose $\cP$ and $\cQ$ are towers of shapes $R$ and $S$ satisfying $\vec 0\in R\subset S$, such that $\cP\le\cQ$. Let  $A$ and $B$ be the base sets of $\cP$ and $\cQ$ and suppose $A(\cQ)\neq\emptyset$.
If $J$ is the maximal set of indices in $S$ that satisfy:
 \begin{equation*}
 \cup_{\vec v\in J} T^{\vv}B\subset A(\cQ)\qquad\text{and}\qquad J\cap\partial_R(S)=\emptyset,
 \end{equation*}
 then $\cQ_J$ is a tower of shape $R$ with base
  $A'= \cup_{\vec v\in J} T^{\vv}B$.
 \end{lem}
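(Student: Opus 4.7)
The plan is to verify that $J$ is an $R$-stacking set for $S$. Once this is established, the construction described just before Lemma~\ref{metric} immediately produces $\cQ_J$ as a shape-$R$ tower with base $A'=\bigcup_{\vv\in J}T^\vv B$, which is exactly the conclusion. Two properties must be checked: (i) $R+\vv\subseteq S$ for every $\vv\in J$, and (ii) $J$ is $R$-separated.

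Property (i) follows from $J\cap\partial_R(S)=\emptyset$ together with $\vzr\in R$. If $R+\vv$ were not contained in $S$ for some $\vv\in J$, then taking the translate $\vv$ in the definition of $\partial_R(S)$ is legitimate because $\vv=\vzr+\vv\in R+\vv$, and it places $\vv$ in $\partial_R(S)$, a contradiction.

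The main point is (ii), which rests on the hypothesis $\cP\le\cQ$. Because every atom of $\cP$ is a union of atoms of $\cQ$, the base $A$ of $\cP$ is itself a union of $\cQ$-atoms; in particular, for each $\vec a\in S$, the intersection $A\cap T^{\vec a}B$ equals either $T^{\vec a}B$ or $\emptyset$. Set $K=\{\vec a\in S:T^{\vec a}B\subseteq A\}$. Then $A(\cQ)=\bigcup_{\vec a\in K}T^{\vec a}B$ (possibly together with the error set $E$ of $\cQ$ in the degenerate case $E\subseteq A$, which contributes no further indices in $S$), so the hypothesis $\bigcup_{\vv\in J}T^\vv B\subseteq A(\cQ)$ forces $J\subseteq K$. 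To see that $K$ itself is $R$-separated, suppose for contradiction that $k_1-k_2=r_1-r_2$ for some $k_1\neq k_2$ in $K$ and some $r_1\neq r_2$ in $R$. Then $r_1+k_2=r_2+k_1$, so the positive-measure set $T^{r_1+k_2}B$ is contained in both $T^{r_1}A$ and $T^{r_2}A$, contradicting the fact that these are distinct levels of the shape-$R$ tower $\cP$. Hence $K$—and therefore $J\subseteq K$—is $R$-separated.

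Combining (i) and (ii), $J$ is an $R$-stacking set for $S$, so $\cQ_J$ is a well-defined tower of shape $R$ with base $A'$. The main conceptual step is (ii): recognizing that, via $\cP\le\cQ$, the sub-collection $A(\cQ)$ of $\cQ$-levels inherits the $R$-separation already built into the $\cP$-tower, so any subset of its $\cQ$-level index set is automatically $R$-separated. The maximality of $J$ is not needed for this lemma; it will matter only in subsequent arguments.
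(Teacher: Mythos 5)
Your proof is correct and takes essentially the same route as the paper: you verify that $J$ is an $R$-stacking set for $S$ (the containment $R+J\subseteq S$ from $J\cap\partial_R(S)=\emptyset$ together with $\vec 0\in R$, and $R$-separation from the fact that a positive-measure translate of $B$ cannot sit inside two disjoint levels of $\cP$), and then invoke the $\cQ_J$ construction. The only cosmetic difference is that you use $\cP\le\cQ$ to upgrade the majority condition (\ref{best}) defining $A(\cQ)$ to exact containment of $\cQ$-levels in $A$, whereas the paper's one-line proof argues directly from (\ref{best}); both lead to the same contradiction.
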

 \begin{proof}
  Since $\cQ$ is a tower, it follows from
  (\ref{best}) that the sets $\{T^\vv A':\vv\in R\}$
  are pairwise disjoint. Thus $J$ is $R$ separated and an $R$ stacking set for $S$.
\end{proof}

 Under the hypotheses of Lemma~\ref{stackup}, we define
 $\cP(\cQ)=\cQ_J$. Note that $\cP(\cQ)$ and $\cP$ have the same shape $R$, and that $\cP(\cQ)\le\cQ$.  The next lemma shows that
if we  know that the levels in $\cQ$ approximate the levels in $\cP$ well (or equivalently,
the levels in $\cQ$ approximate the base $A$ of $\cP$ well), then we also know that
$\cP(\cQ)$ is a good approximation of $\cP$.

\begin{lem}\label{needgeom}
\begin{equation*}
d(\cP(\cQ),\cP)\le |R|\cdot \mu(A(\cQ)\triangle A)+
|\partial_R(S)|\cdot \mu(B)+\vert R\vert\mu(E_{\cQ}),
\end{equation*}
where $E_{\cQ}$ denotes the error set of $\cQ$.
\end{lem}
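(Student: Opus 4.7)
My plan is to expand the metric $d(\cP(\cQ),\cP)$ into symmetric differences of atoms and then to bound the base symmetric difference $\mu(A\triangle A')$ through the chain $A \rightsquigarrow A(\cQ) \rightsquigarrow A'$. First, I would write both $\cP$ and $\cP(\cQ) = \cQ_J$ as towers of the common shape $R$, with bases $A$ and $A' = \bigcup_{\vw \in J} T^{\vw}B$ respectively. Since $T$ is measure-preserving, each of the $|R|$ level terms $\mu(T^{\vv}A \triangle T^{\vv}A')$ collapses to $\mu(A \triangle A')$, and the symmetric difference of the error sets is likewise controlled by $\mu(A \triangle A')$, since each error set is the complement of a disjoint union of translates of its base. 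This reduces the problem to bounding $\mu(A \triangle A')$.

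Second, I would classify the points of $A \triangle A'$ according to which $\cQ$-atom they belong to. Because $\cP \le \cQ$, the set $A$ is a union of $\cQ$-atoms, and the majority criterion~(\ref{best}) forces each $\cQ$-level contained in $A$ to have its index in $I$. Points of $A \triangle A'$ then split into three types: (a) those lying in a level $T^{\vw}B$ with $\vw \in I \cap \partial_R(S)$---the levels excised when passing from $A(\cQ)$ to $A'$ because $J = I \setminus \partial_R(S)$---contributing at most $|\partial_R(S)| \mu(B)$; (b) those accounting for the discrepancy between $A$ and $A(\cQ)$, contributing a term controlled by $\mu(A \triangle A(\cQ))$; and (c) those lying inside $A \cap E_{\cQ}$, contributing at most $\mu(E_{\cQ})$. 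Combining these three contributions with the factor $|R|$ from the first step produces the stated inequality.

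The main subtlety, though routine, is the bookkeeping for $E_{\cQ}$: since $A$ need not be disjoint from the error set of $\cQ$, its contribution must be separated cleanly from the boundary-level contribution in order to avoid double-counting. Inserting $A(\cQ)$ as an intermediate set in the triangle inequality $\mu(A \triangle A') \le \mu(A \triangle A(\cQ)) + \mu(A(\cQ) \triangle A')$ makes this separation automatic: $A(\cQ)$, by construction, is a union of $\cQ$-levels and contains no part of $E_{\cQ}$, while $A(\cQ) \triangle A'$ consists only of whole $\cQ$-levels whose indices lie in $\partial_R(S)$. This is the role played by the extra inequality in Lemma~\ref{metric}-style stackings, and it is precisely the geometric ingredient of the shape $R$ that distinguishes the higher-dimensional case from Baxter's original argument.
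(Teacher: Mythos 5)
Your route is essentially the paper's: expand the metric $d(\cP(\cQ),\cP)$ over the $|R|$ levels of the common shape to reduce everything to the bases, then compare $A$ with $A'=\bigcup_{\vw\in J}T^{\vw}B$ by inserting $A(\cQ)$, so that the discrepancy splits into $\mu(A\triangle A(\cQ))$, the excised levels with indices in $\partial_R(S)$ (each of measure $\mu(B)$), and an $E_{\cQ}$ contribution. Your observation that $A(\cQ)\triangle A'$ consists of whole $\cQ$-levels indexed by $\partial_R(S)$ is exactly the paper's use of the identity $(A\setminus C)\triangle B\subseteq(A\triangle B)\cup C$.

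The bookkeeping, however, does not deliver the constants in the statement, and you should not claim that it does. Multiplying all three of your contributions to $\mu(A\triangle A')$ by the factor $|R|$ coming from the sum over levels yields $|R|\,|\partial_R(S)|\,\mu(B)$ for the middle term, not $|\partial_R(S)|\,\mu(B)$; and the error-atom term $\mu(E_{\cP}\triangle E')$ of the metric, which you only say is ``controlled by'' $\mu(A\triangle A')$, is not absorbed: when $\cP\le\cQ$ one has $A'\subseteq A$ and $\mu(E_{\cP}\triangle E')=|R|\,\mu(A\setminus A')$ exactly, so it doubles the bound. (The paper's own write-up elides the same two points: its first displayed line ignores the error atom, and the inequality it invokes at the end to trade $|R|\,|J\cap\partial_R(S)|$ for $|\partial_R(S)|$ is stated in the direction that does not give the reduction. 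Indeed no accounting can recover the stated constants: already for $R=\{0,1\}\subset S=\{0,\dots,n\}$ with one excised boundary level and $\mu(E_{\cQ})$ tiny, $d(\cP(\cQ),\cP)=2|R|\,\mu(A\setminus A')$ exceeds the stated right-hand side.) What your argument honestly proves is $d(\cP(\cQ),\cP)\le 2|R|\bigl(\mu(A(\cQ)\triangle A)+|\partial_R(S)|\,\mu(B)+\mu(E_{\cQ})\bigr)$, and this weaker form is all that the application in Lemma~\ref{Cauchy} requires, since there $|R_k|$ is fixed as $\ell\to\infty$; state and prove that version rather than the literal one. A final caution: your case analysis leans on $\cP\le\cQ$ (so that $A$ is a union of $\cQ$-atoms), under which $A\triangle A(\cQ)=A\cap E_{\cQ}$ and your cases (b) and (c) coincide; but in Lemma~\ref{Cauchy} the lemma is applied to towers that are not nested, where $A(\cQ)$ is genuinely the majority-rule approximation, so the paper's purely set-theoretic manipulation, which never uses nesting, is the more robust argument to retain.
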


\begin{proof}
Suppose $J$ is the index set in $S$ so that $\cP(\cQ)=\cQ_J$.  Then we have
\begin{eqnarray}
d(\cP(\cQ),\cP)
&=&|R|\,\mu(A(\cQ)\triangle A)\\ \nonumber
&\le &|R|\,\mu\left((A(\cQ)\triangle A)\cup
\bigcup_{\vw\in J\cap\partial_R(S)}T^\vw B\cup E_{\cQ}\right) \\ \nonumber
&=&|R|\,\mu(A(\cQ)\triangle A)+|R|\,
\mu\left(\bigcup_{\vw\in J\cap\partial_R(S)}T^\vw B\cup E_{\cQ}\right) \\ \nonumber
&\le&|R|\mu(A(\cQ)\triangle A)+|R|\,|J\cap\partial_R(S)|\,\mu(B)+\vert B\vert \mu(E_{\cQ}),\nonumber
\end{eqnarray}
where we used the identity
$(A\backslash C)\triangle B\subseteq (A\triangle B)\cup C$.
The result follows since
$|\partial_R(S)|\le|R|\,|J\cap\partial_R(S)|$, which is true since
$J$ is $R$-separated.
\end{proof}

The next result shows that given a sequence of towers, we can construct a new sequence, maintaining the shapes, such that each tower is well approximated by subsequent towers far enough  in the sequence.
\begin{lem}\label{Cauchy}
Let $\cR=\{R_k\}$ be a F\o lner sequence, and $\cP_k$ be a sequence of towers of shapes
$R_k$, such that $0\in R_k$ and
$\cP_k\rightarrow \epsilon$.
Then for any $k\ge 1$ and $\delta>0$, we have for all sufficiently large $\ell>k$ that $R_k\subseteq R_\ell$ and
 $d(\cP_k(\cP_\ell),\cP_\ell)<\delta$.
\end{lem}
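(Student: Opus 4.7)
The plan is to unpack $d(\cP_k(\cP_\ell), \cP_\ell)$ directly in terms of how the stacked tower $\cP_k(\cP_\ell) = (\cP_\ell)_J$ fails to recover the full level structure of $\cP_\ell$, and to bound each contribution via the F\o lner property and $\cP_\ell \to \epsilon$. I will not apply Lemma~\ref{needgeom} directly, as it naturally produces a bound on $d(\cP_k(\cP_\ell), \cP_k)$ rather than on the quantity in question; instead I compare the two towers through the common underlying level structure $\{T^w B_\ell\}$.

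First I would arrange $R_k \subseteq R_\ell$. Since $\cR$ is F\o lner and $R_k$ is a finite set containing $\vzr$, for each $v \in R_k$ one has $|R_\ell \cap (R_\ell - v)|/|R_\ell| \to 1$; intersecting over the finite set $R_k$ gives that $\{w \in R_\ell : R_k + w \subseteq R_\ell\}$ is nonempty for all large $\ell$. After re-basing $\cP_\ell$ by replacing $B_\ell$ with $T^w B_\ell$ for such a $w$, I may assume $\vzr \in R_k \subseteq R_\ell$, which legitimizes the construction $\cP_k(\cP_\ell)$.

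Next I would analyze the structure of $\cP_k(\cP_\ell) = (\cP_\ell)_J$, where $J$ is a maximal $R_k$-separated subset of $\{w \in R_\ell \setminus \partial_{R_k}(R_\ell) : T^w B_\ell \subseteq A(\cP_\ell)\}$. The two towers share the levels $T^w B_\ell$ for $w \in R_k + J$ (which together make up the refinement of the levels of $\cP_k(\cP_\ell)$ by $\cP_\ell$), and disagree on $R_\ell \setminus (R_k + J)$: these ``lost'' levels get absorbed into the single error atom of $\cP_k(\cP_\ell)$. Identifying the alphabet of $\cP_k(\cP_\ell)$ with the corresponding sublabels of $\cP_\ell$, the metric $d(\cP_k(\cP_\ell), \cP_\ell)$ is controlled by $|R_\ell \setminus (R_k + J)| \cdot \mu(B_\ell)$ (the lost levels) plus $\mu(E_{\cP_\ell})$ (which is itself small). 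The set $R_\ell \setminus (R_k + J)$ splits into three pieces: (i) the F\o lner boundary $\partial_{R_k}(R_\ell)$ has density $\to 0$; (ii) levels with $T^w B_\ell \not\subseteq A(\cP_\ell)$ have total measure bounded by $2\mu(A_k \triangle A(\cP_\ell)) \to 0$ (since $\cP_\ell \to \epsilon$ forces the best whole-level approximation of $A_k$ to converge to $A_k$); and (iii) a packing residue from maximality of $J$, bounded by a factor depending only on $|R_k|$ times (i) and (ii).

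The main obstacle I expect is controlling (iii): for each interior $v \in R_\ell \setminus \partial_{R_k}(R_\ell)$ that is majority-covered by $A_k$ but not in $R_k + J$, maximality guarantees an obstruction from some existing $u \in J$, and a standard packing argument (each $u \in J$ blocks at most $|R_k - R_k|$ such points $v$) closes the estimate. Combining (i)--(iii) with $\mu(E_{\cP_\ell}) \to 0$ and sending $\ell \to \infty$ then yields $d(\cP_k(\cP_\ell), \cP_\ell) < \delta$ for all sufficiently large $\ell$.
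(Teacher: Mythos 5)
Your proposal diverges from the paper at the very first decision, and the divergence is fatal. The quantity the paper actually needs, and the one its own proof establishes by a direct application of Lemma~\ref{needgeom} with $\cP=\cP_k$, $\cQ=\cP_\ell$, is $d(\cP_k(\cP_\ell),\cP_k)$: the towers $\cP_k(\cP_\ell)$ and $\cP_k$ share the shape $R_k$ and hence the alphabet $R_k\cup\{\vec\epsilon\}$, so $d$ is defined between them, and it is exactly this distance that is iterated via Lemma~\ref{metric} to produce the Cauchy estimate (\ref{cauchyest}) and the bound $d(\cQ_k,\cP_k)<\sum_{j\ge k}\delta_j$ in the proof of Theorem~\ref{unitile}. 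The expression $d(\cP_k(\cP_\ell),\cP_\ell)$ in the statement is a typo: those two towers have different alphabets ($R_k\cup\{\vec\epsilon\}$ versus $R_\ell\cup\{\vec\epsilon\}$), the metric $d$ is only defined for partitions with a common alphabet, and no identification with ``sublabels'' is available since each level of $\cP_k(\cP_\ell)$ is a union of $|J|$ levels of $\cP_\ell$ (even restricting to the common labels $R_k$, the atoms $T^{\vv}A'$ and $T^{\vv}B_\ell$ are wildly different sets). By rejecting Lemma~\ref{needgeom} precisely because it bounds $d(\cP_k(\cP_\ell),\cP_k)$, you have abandoned the intended statement in favor of an ill-posed one. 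The paper's proof is short: apply Lemma~\ref{needgeom}, use $\cP_\ell\rightarrow\epsilon$ to make $\mu(A_k(\cP_\ell)\triangle A_k)$ and $\mu(E_\ell)$ small, and kill the boundary term with $\mu(B_\ell)\le 1/|R_\ell|$ and the F\o lner property.

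Two of your steps would also fail on their own terms. First, item (iii): in Lemma~\ref{stackup} the set $J$ is not a greedily chosen maximal $R_k$-separated packing; it is the set of \emph{all} indices $w\in R_\ell\setminus\partial_{R_k}(R_\ell)$ with $T^wB_\ell\subseteq A_k(\cP_\ell)$, and the content of that lemma is that this full set is automatically $R_k$-separated, via (\ref{best}) and the disjointness of the levels of $\cP_k$. So there is no packing residue to control; and if there were, your bound would not close it, since maximality of a packing only shows that $R_k+J$ covers at least a fraction $|R_k|/|R_k-R_k|$ of the available indices, a ratio bounded away from $1$ (about $2^{-d}$ for boxes), so the residue would have measure bounded below rather than tending to $0$. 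Second, item (ii): recall that $A_k$ is the \emph{base} of $\cP_k$, of measure at most $1/|R_k|$, so the levels of $\cP_\ell$ not contained in $A_k(\cP_\ell)$ have total measure roughly $1-1/|R_k|$, nothing like $2\mu(A_k\triangle A_k(\cP_\ell))$; those levels are recovered by the translates $T^{\vv}A'$ for $\vv\in R_k\setminus\{\vzr\}$, not by the base, so your accounting of the ``lost'' levels is off by essentially the whole space.
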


 \begin{proof}
 Fix $k\ge 1$ and
 use the F\o lner property to choose $\ell$ large
  enough that $R_k\subseteq R_\ell$.   Let $A_k$ be the base of $\cP_k$, and $E_k$ the error set.  In addition, since
 $\cP_k\rightarrow\epsilon$, for all sufficiently large $\ell$ we have  
\begin{equation*}
\mu(A_k(\cP_\ell)\triangle A_k)<\frac{\delta}{3\,|R_k|},\qquad\text{and}\qquad\mu(E_\ell)<\frac{\delta}{3\vert R_k\vert}.
\end{equation*}
 For such an $\ell$ the towers $\mathcal P=\mathcal P_k$ and $\mathcal Q=\mathcal P_{\ell}$ satisfy the hypotheses of Lemma~\ref{stackup}.  Thus we can apply Lemma~\ref{needgeom} to conclude
\begin{equation}\label{setup}
d(\cP_k(\cP_\ell),\cP_\ell)\le |R_k|\mu(A_k({\cP_\ell})\triangle A) +
|\partial_{R_k}(R_\ell)|\mu(A_\ell)+\vert R_k\vert\mu(E_{\ell}).
\end{equation}
Since
Since $\cP_\ell$ is a tower, for all $\ell$
we have
$\mu(A_\ell)\le 1/|R_\ell|$.   So we have
\begin{equation}
|\partial_{R_k}(R_\ell)|\mu(A_\ell)
\le\frac{|\partial_{R_k}(R_\ell)|}{|R_\ell|}
\end{equation}
and since $\cR$ is a F\o lner sequence this can be made $<\frac{\delta}{3}$ for all $\ell$ sufficiently large.
Using these estimates in (\ref{setup}) we have
\begin{equation*}
d(\cP_k(\cP_\ell),\cP_\ell)<\delta.
\end{equation*}
\end{proof}

We are now ready for the proof of Proposition \ref{unitile}.

\begin{proof}[Proof of Proposition \ref{unitile}]
Let $\delta_k>0$
satisfy
$\sum\delta_k<\infty$.
Using Lemma~\ref{Cauchy}, we can then assume
by passing to a subsequence, that the sequence $\cP_k$ of $T$-towers satisfies
$\rho(\cP_k(\cP_{k+1}),\cP_{k+1})<\delta_k$ for all $k$.

We will now
define a doubly infinite sequence of $T$-towers $\cP_{k,\ell}$, $k\ge 1$, $\ell\ge 0$. We start by putting, for each
$k$, $\cP_{k,0}:=\cP_k$ and
\begin{equation*}
\cP_{k,1}:=\cP_{k,0}(\cP_{k+1,0})=\cP_k(\cP_{k+1}).
\end{equation*}
Note that $\cP_{k,1}$ has shape $R_k$ and satisfies
$\cP_{k,1}\le\cP_{k+1,0}$. By Lemma~\ref{stackup} there exists
a stacking set
$I_k\subseteq R_{k+1}\backslash\partial_{R_k}(R_{k+1})$, which, in particular, is
$R_k$-separated.

Now suppose we have defined $\cP_{k,m}$ for all $k$ and for all
$0\le m<\ell$. Then for $k=1,2,\dots$, we define
\begin{equation}\label{stack}
\cP_{k,\ell}=(\cP_{k+1,\ell-1})_{I_k}.
\end{equation}
By induction, $\cP_{k+1,\ell-1}$ is a tower of shape
$R_{k+1}$, so as in our previous discussion, (\ref{stack}) is well defined.  The  result is a tower $\cP_{k,\ell}$ of shape $R_k$ satisfying
$\cP_{k,\ell}\le\cP_{k+1,\ell-1}$.

Repetedly using Lemma~\ref{metric} and equation (\ref{stack}), we have by induction that
\begin{equation}\label{cauchyest}
\rho(\cP_{k,\ell},\cP_{k,\ell+m})\le\sum_{j=\ell}^{\ell+m-1}\delta_j.
\end{equation}
Equation (\ref{cauchyest}) shows $\cP_{k,\ell}$ is a Cauchy sequence
in $\ell$ for each $k$, and we
define $\cQ_k=\lim_\ell \cP_{k,\ell}$. It follows that
$\cQ_k$ is a partition of shape $R_k$. Also
$\cQ_k=(\cQ_{k+1})_{I_k}$, so that $\cQ_k\prec\cQ_{k+1}$.
Moreover, $d(\cQ_k,\cP_k)<\sum_{j\ge k}\delta_k\rightarrow 0$. This
implies $\cQ_k\rightarrow\epsilon$, since
\begin{eqnarray}
\mu(A({\cQ_k})\triangle A) & \le & \mu(A({\cQ_k})\triangle A({\cP_k}))
+\mu(A({\cP_k})\triangle A)
\nonumber \\
& \le &  d(\cQ_k,\cP_k)+\mu(A({\cP_k})\triangle A).
\end{eqnarray}
\end{proof}



\def\cprime{$'$}
\providecommand{\bysame}{\leavevmode\hbox to3em{\hrulefill}\thinspace}
\providecommand{\MR}{\relax\ifhmode\unskip\space\fi MR }
\providecommand{\MRhref}[2]{%
  \href{http://www.ams.org/mathscinet-getitem?mr=#1}{#2}
}
\providecommand{\href}[2]{#2}

 \end{document}